\documentclass{amsart}
\usepackage{amsmath,amsthm,amssymb,amsthm, amsfonts}
\makeatletter
 \def\@makefnmark{%
 \leavevmode
 \raise.9ex\hbox{\check@mathfonts
 \fontsize\sf@size\z@\normalfont%
 \@thefnmark}%
 }
 \makeatother
\textwidth=13.5cm
\oddsidemargin=13mm
\evensidemargin=12mm
\newcommand\diam{\operatorname{diam}}
\newcommand{\N}{\mathbb{N}}

\newcommand{\R}{\mathbb{R}}

\newcommand{\Z}{\mathbb{Z}}

\newcommand{\Ray}{\R_{+}}
\theoremstyle{definition}
\newtheorem{theorem}{Theorem}[section]
\newtheorem{lemma}[theorem]{Lemma}
\newtheorem{proposition}[theorem]{Proposition}
\newtheorem{corollary}[theorem]{Corollary}
\theoremstyle{definition}

\theoremstyle{remark}
\newtheorem{remark}[theorem]{Remark}
\newcounter{cn}
\setcounter{cn}{1}
\title{Indecomposable continua as Higson coronae}
\author{Yutaka Iwamoto}
\address{Department of Engineering Science, National Institute of Technology (KOSEN), Niihama College,
Niihama, 792-8580, Japan}
\email{iwamoto@sci.niihama-nct.ac.jp}
\subjclass[2010]{Primary 54D35; Secondary 53C23, 20E34}
\keywords{Higson corona, indecomposable continuum, finitely generated group}
\begin{document}

\begin{abstract}
In this paper, we consider spaces whose Higson coronae are indecomposable continua.
We show that for a non-compact proper metric space $X$
 which is coarsely geodesic and has coarse bounded geometry,
 the Higson corona of $X$ is an indecomposable continuum
 if and only if $X$ is coarsely equivalent to the space of natural numbers.
Then we give characterizations of finitely generated groups that have one or two ends
 by decomposability/indecomposability of the components of their Higson coronae.
\end{abstract}

\maketitle

\section{Introduction}\label{intro}
The notion of Higson compactification was introduced by Higson for non-compact complete Riemannian manifolds.
Then Roe defined the Higson compactification for more general spaces (cf. \cite{Roe0}, \cite{Roe}).
It has been used in large scale geometry
 to capture the global properties of Riemannian manifolds, groups, and others.
In particular, the global information is thought to be condensed in its Higson corona.
\par
In this paper, we consider spaces whose Higson coronae are indecomposable continua.
For example, the ray $\Ray=[0,\infty)$ is known as such a space, that is,
 the Higson corona $\nu \Ray$ of $\Ray$ is known to be
  a non-metrizable indecomposable continuum \cite{IT}.
This is closely related to the fact that
 the Stone-\v{C}ech remainder $\beta\Ray\setminus \Ray$
 of the ray is a non-metrizable indecomposable continuum \cite{Bellamy},
 where $\beta\Ray$ denotes the Stone-\v{C}ech compactification of $\Ray$.
For a non-compact locally connected generalized continuum $X$, Dickman \cite{DICK} gave a characterization that the Stone-\v{C}ech remainder $\beta X\setminus X$ is an indecomposable continuum if and only if $X$ has the strong complementation property.
The ray $\Ray$ is coarsely equivalent to the subspace $\N\subset \Ray$ of natural numbers.
Hence, the Higson corona $\nu \N$ of $\N$ is homeomorphic to $\nu \Ray$
 since coarsely equivalent spaces have homeomorphic Higson coronae.
Thus $\nu \N$ is an indecomposable continuum.
We show that the reverse implication also holds for some class of proper metric spaces.
More precisely, we show that for a non-compact proper metric space $X$
 which is coarsely geodesic and has coarse bounded geometry,
 if the Higson corona $\nu X$ of $X$ is an indecomposable continuum
 then $X$ must be coarsely equivalent to $\N$.
As a result, we give a characterization that, for a non-compact proper metric space $X$
 which is coarsely geodesic and has coarse bounded geometry,
 the Higson corona $\nu X$ of $X$ is an indecomposable continuum if and only if $X$ is coarsely equivalent to $\N$.
Moreover, we characterize a space which is coarsely equivalent to
 the space of integers $\Z$ using the indecomposability of the components of its Higson corona.
Then we apply these characterizations to finitely generated groups.
\par
It is known that a finitely generated group $G$ has 0, 1, 2, or infinitely many ends,
 and the group structure is determined when it has two or infinitely many ends (cf. \cite{BH}, \cite{DK}, \cite{Geoghegan}).
Also, the number of the ends of $G$ is zero
 if and only if $G$ is a finite group.
In that sense, finitely generated groups having exactly one end are fascinating.
We characterize a finitely generated group having exactly one end as a group whose Higson corona is a decomposable continuum.
In contrast, in the case of a group having exactly two ends, we characterize it as a group whose Higson corona is a topological sum of two indecomposable continua, each of which is homeomorphic to $\nu \N$.
\section{Preliminaries}\label{prelim}
Throughout this paper, $\Ray$ denotes the ray $[0, \infty)$ with the metric 
$$d(x,y)=|x-y|$$
 and $\N\subset \Ray$ denotes the space of natural numbers with the induced metric.
In what follows,
 a metric space $(X,d_X)$ is assumed to have a base point $x_0$.

\subsection{Basic properties of the Higson compactification}
In general, the Higson compactification $X^{\nu}$
 of a proper metric space $X$ is defined as the Gelfand dual of
 a unital commutative $C^{\ast}$-algebra of the bounded complex-valued continuous Higson functions on $X$ \cite{Roe}.
On the other hand,
 there is another way to define the Higson compactification of $X$ using an evaluation map of $X$ into Tychonoff cube.
It is known that the compactifications obtained by these two definitions are equivalent
 under the metric coarse structure of $X$.
The latter was first introduced by Keesling \cite{Keesling} and we adopt here the latter definition.
\par
Let $(X, d_X)$ be a metric space and let $B_{d_X}(x,r)$
 be the closed ball of radius $r$ centered at $x\in X$.
A metric $d_X$ on $X$ is called {\it proper} if $B_{d_X}(x, r)$ is compact
 for every $x\in X$ and $r>0$.
For a subset $A$ of $X$, the diameter of $A$ is denoted by $\diam_{d_X} A$,
 that is,
$$\diam_{d_X} A=\sup \{ d_X(x,y)\mid x,y\in A\}.$$

Let $(X,d_X )$ and $(Y,d_Y)$ be proper metric spaces.
A map $f: X\to Y$ is a {\it Higson function}
 provided that
\begin{align}
\tag*{$(\ast)_{r}^f$}
 \qquad \lim_{d_X(x_0, x) \to \infty} \diam_{d_Y} f(B_{d_X}(x,r))=0
\end{align}
 for each $r>0$, that is,
 $f:X\to Y$ is a Higson function
 if and only if, given $r>0$
  and $\varepsilon>0$, there exists a compact subset $K\subset X$
  such that $\diam_{d_Y} f(B_{d_X}(x,r))<\varepsilon$ whenever $x\in X\setminus K$.

Let $C_b(X)$ be the set of all bounded real-valued continuous functions on $X$.
For each $f\in C_b (X)$,
 let $I_f$ denote the closed interval $[\inf f, \sup f]\subset \R$.
For a subset $F$ of $C_b (X)$, let
$$e_F :X\to \prod_{f\in F} I_f$$
 be the {\it evaluation map} of $F$,
 that is,
 $(e_F (x) )_f =f(x)$
  for every $x\in X$.
It is known that
 if $F$ separates points from closed sets,
 then $e_F$ is a topological embedding \cite[2.3.20]{Eng}.
Identifying $X$ with $e_F (X)$,
 the closure $\overline{e_F (X)}$ of $e_F (X)$ in
  $\prod_{f\in F} I_f$
 gives a compactification of $X$.

For a proper metric space $X$,
 we consider the following subsets of $C_b (X)$:
\begin{align*}
C_H(X)&=\{ f\in C_b (X)\mid f \text{ satisfies } (\ast)_{r}^f \text{ for every } r>0 \}.
\end{align*}
Then $C_H(X)$ is a closed subring of $C_b(X)$ with respect to the sup-metric.
Also, it contains all constant maps and separates points from closed sets.
Hence, the subring
 $C_H(X)$ uniquely determines a compactification of $X$ (see \cite[3.12.22 (e)]{Eng})
 which is called the {\it Higson compactification} $X^{\nu}$ of $X$.
Then the compact subset $\nu X=X^{\nu} \setminus X$ is called the {\it Higson corona} of $X$.
\par

The following proposition is a fundamental property of the Higson compactification.

\begin{proposition}
\label{prop:2.1}
\cite[Proposition 1]{Keesling}
Let $X$ be a proper metric space.
Then the Higson compactification is the unique compactification of $X$
 such that if $Y$ is a compact metric space and $f:X\to Y$ is a continuous map,
 then $f$ has a continuous extension $f^{\nu} : X^{\nu} \to Y$
 if and only if
 $f$ satisfies $(\ast )^{f}_{r}$ for every $r>0$.
\end{proposition}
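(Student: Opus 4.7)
The plan is to first establish the extension characterization for the specific compactification $X^{\nu}$, and then derive uniqueness among all compactifications from this. The main reduction is to embed the compact metric target $Y$ into the Hilbert cube $[0,1]^{\N}$ (Urysohn), so that any continuous $f : X \to Y$ becomes a sequence $(f_{n})_{n \in \N}$ of continuous maps $f_{n} : X \to [0,1]$; this reduces extension questions to the real-valued setting where $C_H(X)$ lives.

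For the ``if'' direction, I would first observe that each coordinate $f_{n}$ inherits the Higson condition from $f$, since coordinate projections do not increase diameter. Hence $f_{n} \in C_H(X)$, and by the very construction of $X^{\nu}$ via the evaluation map associated with $C_H(X)$, each $f_{n}$ extends continuously to $f_{n}^{\nu} : X^{\nu} \to [0,1]$. Assembling coordinates yields a continuous $F : X^{\nu} \to [0,1]^{\N}$ extending $f$; since $X$ is dense in $X^{\nu}$ and $Y$ is closed in $[0,1]^{\N}$, the image $F(X^{\nu})$ lies in $Y$, producing the required extension $f^{\nu}$.

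For the ``only if'' direction, I would argue by contradiction. If $(\ast)_{r}^{f}$ fails for some $r, \varepsilon > 0$, choose sequences $y_{n}, z_{n} \in X$ with $d_X(x_0, y_{n}) \to \infty$, $d_X(y_{n}, z_{n}) \leq r$, and $d_Y(f(y_{n}), f(z_{n})) \geq \varepsilon$. Using compactness of $X^{\nu}$, after passing to subnets $y_{n} \to \xi_{1}$ and $z_{n} \to \xi_{2}$ in $X^{\nu}$. The critical step, which I expect to be the main technical point, is to force $\xi_{1} = \xi_{2}$: for any $\phi \in C_H(X)$ the Higson condition applied with this same $r$ gives $|\phi(y_{n}) - \phi(z_{n})| \to 0$, so $\phi^{\nu}(\xi_{1}) = \phi^{\nu}(\xi_{2})$; since the evaluation-map construction makes $C_H(X)$ separate points of $X^{\nu}$, this forces $\xi_{1} = \xi_{2}$, and then continuity of the assumed extension $f^{\nu}$ contradicts $d_Y(f(y_{n}), f(z_{n})) \geq \varepsilon$.

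For uniqueness, I would specialize to $Y = [0,1]$: any compactification $\alpha X$ with the stated property satisfies $C(\alpha X)|_{X} = C_H(X)$ as subrings of $C_b(X)$. The standard correspondence between compactifications and closed unital subrings of $C_b(X)$ that separate points from closed sets (as used in the paper via \cite[3.12.22 (e)]{Eng}) then forces $\alpha X$ to be equivalent to $X^{\nu}$, completing the proof.
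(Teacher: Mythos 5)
The paper does not prove this proposition; it is quoted verbatim from Keesling's paper, so there is no in-text argument to compare against. Your proof is correct and is essentially the standard argument (and, as far as I can tell, the same in spirit as Keesling's): reduce to real-valued coordinates via an embedding of $Y$ into the Hilbert cube for the extension direction, use the fact that Higson functions cannot separate the limits of two sequences that stay within bounded distance of each other for the converse, and invoke the correspondence between compactifications and closed separating subrings of $C_b(X)$ for uniqueness. Two small points of precision: the negation of $(\ast)_r^f$ gives points $y_n, z_n$ in a common ball $B_{d_X}(x_n,r)$, so $d_X(y_n,z_n)\leq 2r$ rather than $\leq r$ (harmless, since $(\ast)_{2r}^{\phi}$ is also available); and ``coordinate projections do not increase diameter'' should be replaced by the observation that the projections $\pi_n:[0,1]^{\N}\to[0,1]$ are uniformly continuous, and that on the compact space $Y$ the original metric and the one induced from the Hilbert cube are uniformly equivalent, so $\diam f(B_{d_X}(x,r))\to 0$ is unaffected by the change of metric and passes to each coordinate $f_n=\pi_n\circ f$.
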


A finite collection $E_1, \cdots , E_n$ of subsets of a metric space $(X,d_X)$
 {\it diverges coarsely} in $X$
if for any $r>0$ the intersection of $r$-neighborhoods of them is bounded,
 that is,
 there exists $R>0$ such that
$$\left(\cap_{i=1}^{n}N_{d_X} (E_i, r)\right)\cap \left(X\setminus B_{d_X} (x_0, R)\right)=\emptyset,$$
 where $N_{d_X}(E_i, r)$ is the $r$-neighborhood of $E_i$ in $X$, i.e.,
$$N_{d_X} (E_i, r)=\{ x\in X\mid d_X (x, E_i )<r\}.$$

For each subset $A$ of a proper metric space $X$,
 $A^{\ast}$ denotes the space $\overline{A}\setminus A$,
 where $\overline{A}$ is the closure of $A$ in the Higson compactification $X^{\nu}$.

\begin{proposition}\label{prop:2.2}
\cite[Proposition 2.3]{DKU}
  Let $X$ be a non-compact proper metric space.
  For a finite collection $E_1,\dots , E_n$ of subsets of $X$,
   the following are equivalent:
  \begin{enumerate}
  \item $\cap_{k=1}^{n} E_k^{\ast} =\emptyset$,
  \item the collection $E_1, \dots, E_n$ diverges coarsely in $X$.
  \end{enumerate}
\end{proposition}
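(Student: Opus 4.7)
The plan is to prove the two implications separately. For $(1)\Rightarrow(2)$ I would argue by contrapositive, using the compactness of $X^{\nu}$ together with the stability under bounded perturbations that the Higson condition imposes on continuous maps. For $(2)\Rightarrow(1)$ I would argue directly, constructing from the $E_k$ an explicit family of Higson functions whose extensions to $X^{\nu}$ separate them on the corona.

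For $(1)\Rightarrow(2)$, suppose the collection does not diverge coarsely: there exist $r>0$ and a sequence $x_m\in X$ with $d_X(x_0,x_m)\to\infty$ such that $x_m\in \bigcap_{k=1}^n N_{d_X}(E_k,r)$ for every $m$. Choose $y_m^{(k)}\in E_k$ with $d_X(x_m,y_m^{(k)})\le r$. Compactness of $X^{\nu}$ produces a subnet of $(x_m)$ converging to some $p\in X^{\nu}$, and $p\in\nu X$ because $d_X(x_0,x_m)\to\infty$. The decisive point is that the same $p$ is also the limit of each $(y_m^{(k)})$ along this subnet: for every $f\in C_H(X)$, the condition $(\ast)_r^f$ gives $\abs{f(x_m)-f(y_m^{(k)})}\le \diam f(B_{d_X}(x_m,r))\to 0$, so $f(y_m^{(k)})$ and $f(x_m)$ share the same limit, and since $C_H(X)$ determines the topology of $X^{\nu}$ this forces $y_m^{(k)}\to p$. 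Hence $p\in\overline{E_k}$ for every $k$, so $p\in \bigcap_k E_k^{\ast}$, contradicting $(1)$.

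For $(2)\Rightarrow(1)$, set $D(x)=1+\sum_{j=1}^n d_X(x,E_j)$ and consider the continuous functions
\[
\phi_k(x)=\frac{d_X(x,E_k)}{D(x)}, \qquad k=1,\ldots,n,
\]
each valued in $[0,1]$, vanishing on $E_k$, and satisfying $\sum_k \phi_k = 1-1/D$. A direct computation, using that $d_X(\cdot,E_k)$ is $1$-Lipschitz and $D$ is $n$-Lipschitz, yields the oscillation bound $\abs{\phi_k(x)-\phi_k(y)}\le (n+1)r/D(y)$ for $d_X(x,y)\le r$. Divergence coarsely makes the denominator grow: for each $r>0$ the set $\bigcap_j N_{d_X}(E_j,r)$ is bounded, so $\max_j d_X(x,E_j)\to\infty$ and hence $D(x)\to\infty$ as $d_X(x_0,x)\to\infty$. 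This verifies $(\ast)_r^{\phi_k}$ for every $r>0$, and Proposition~\ref{prop:2.1} supplies continuous extensions $\phi_k^{\nu}:X^{\nu}\to[0,1]$. Continuity propagates the identity $\sum_k \phi_k = 1-1/D$, so $\sum_k \phi_k^{\nu}(p)=1$ for every $p\in\nu X$. If $p\in \bigcap_k E_k^{\ast}$ lies in $\nu X$, then for each $k$ some net inside $E_k$ converges to $p$, forcing $\phi_k^{\nu}(p)=0$ and therefore $\sum_k \phi_k^{\nu}(p)=0$, a contradiction.

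The main obstacle I foresee is the Higson-condition verification for $\phi_k$: this is precisely where divergence coarsely enters essentially, since without it $D$ need not grow and the Lipschitz oscillation is not damped. A minor subtlety is that the literal definition $E_k^{\ast}=\overline{E_k}\setminus E_k$ may contribute points of $X$ when $E_k$ is not closed; this is resolved by the standard reading (or a closedness assumption on the $E_k$) under which $E_k^{\ast}$ is the trace of $\overline{E_k}$ on $\nu X$, and it has no bearing on the coarse content of the statement.
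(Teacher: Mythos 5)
The paper does not prove this proposition; it is quoted verbatim from [DKU, Proposition~2.3], so there is no internal argument to compare yours against. Judged on its own, your proof is correct, and it is essentially the standard argument for this equivalence. For $(1)\Rightarrow(2)$ the decisive observation is exactly the right one: since $X^{\nu}$ carries the initial topology induced by $C_H(X)$ (being embedded in $\prod_{f\in C_H(X)} I_f$ by the evaluation map), two nets that stay within a uniformly bounded distance of each other have the same accumulation points in $X^{\nu}$ --- this is precisely what $(\ast)^f_r$ encodes --- and compactness plus properness then manufactures a common point of $\cap_k E_k^{\ast}$ whenever divergence fails. For $(2)\Rightarrow(1)$ your functions $\phi_k=d_X(\cdot,E_k)/D$ lie in $C_H(X)$ precisely because coarse divergence forces $D(x)\to\infty$ as $d_X(x_0,x)\to\infty$, and the identity $\sum_k\phi_k^{\nu}\equiv 1$ on $\nu X$ (which tacitly uses that the extension of $1/D$ vanishes on the corona, valid since any net converging to a corona point must leave every bounded set) separates the traces; this is the same Urysohn-type construction used in the cited source. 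Your closing caveat is also well taken: under the paper's literal convention $E^{\ast}=\overline{E}\setminus E$ the implication $(2)\Rightarrow(1)$ genuinely fails for non-closed $E_k$ (a common accumulation point inside $X$ is invisible to coarse divergence), so the proposition must be read with $E_k^{\ast}=\overline{E_k}\cap\nu X$, which is how it is applied throughout the paper, where the relevant sets are closed in any case.
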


Let $X^{\xi}$ and $X^{\zeta}$ be compactifications of $X$.
We say $X^{\xi} \succeq X^{\zeta}$ provided that
 there is a continuous map $f:X^{\xi} \to X^{\zeta}$ such that $f|_{X}=\mbox{id}_X$.
If $X^{\xi} \preceq  X^{\zeta}$ and $X^{\xi} \succeq X^{\zeta}$
 then we say that $X^{\xi}$ and $X^{\zeta}$ are {\it equivalent compactifications} of $X$.
Of course, two equivalent compactifications of $X$ are homeomorphic.

\begin{proposition}\label{prop:2.3}
\cite[Theorem 1.4]{DKU}
Let $X$ be a non-compact proper metric space
 and let $Y$ be an unbounded closed subspace of $X$ with the induced metric.
Then the closure $\overline{Y}$
 of $Y$ in the Higson compactification $X^{\nu}$ of $X$ is a compactification of $Y$
 which is equivalent to the Higson compactification $Y^{\nu}$ of $Y$.
In particular, $Y^{\ast}$ is homeomorphic to the Higson corona $\nu Y$ of $Y$.
\end{proposition}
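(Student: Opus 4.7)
The plan is to verify the equivalence of $\overline{Y}$ and $Y^{\nu}$ by establishing both dominations. That $\overline{Y}$ is a compactification of $Y$ is immediate: it is compact as a closed subspace of $X^{\nu}$, and $Y$ is dense in it by definition.

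For $Y^{\nu} \succeq \overline{Y}$, I would embed $\overline{Y}$ into a product of compact intervals via the separating family $C(\overline{Y})$ and reduce the problem to showing that every $g \in C(\overline{Y})$ restricts to a Higson function on $Y$. Given this, Proposition \ref{prop:2.1} applied to each coordinate map $g|_Y$ yields a continuous extension to $Y^{\nu}$; assembling these coordinates produces the desired continuous surjection $Y^{\nu} \to \overline{Y}$ fixing $Y$ (its image lies in $\overline{Y}$ by density of $Y$). The Higson property of $g|_Y$ follows from the observation that whenever $(y_n), (y'_n) \subset X$ satisfy $d_X(y_n, y'_n) \leq r$ and $d_X(x_0, y_n) \to \infty$, their cluster points in $X^{\nu}$ must coincide: every $h \in C_H(X)$ satisfies $|h(y_n) - h(y'_n)| \to 0$, and $C_H(X)$ separates points of $X^{\nu}$. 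If $g|_Y$ failed the Higson condition, extracting witnesses $y_n, y'_n \in Y$ with $d_Y(y_n, y'_n) \leq r$, $d_Y(y_0, y_n) \to \infty$, and $|g(y_n) - g(y'_n)| \geq \varepsilon$ and passing to convergent subsequences in $\overline{Y}$, continuity of $g$ would deliver two distinct cluster points, contradicting the observation above.

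For $\overline{Y} \succeq Y^{\nu}$, I would show every $f \in C_H(Y)$ extends continuously to $\overline{Y}$ by a two-stage argument: first extend $f$ to a Higson function $\tilde{f} \in C_H(X)$ on the ambient space, then apply Proposition \ref{prop:2.1} to $X$ to obtain $\tilde{f}^{\nu}: X^{\nu} \to I_{f}$, and finally restrict $\tilde{f}^{\nu}$ to $\overline{Y}$. A natural construction of $\tilde{f}$ uses a locally finite open cover $\{U_i\}$ of $X \setminus Y$ with $\diam(U_i) \ll d(U_i, Y)$, a subordinate partition of unity $\{\phi_i\}$, and chosen base points $y_i \in Y$ close to each $U_i$; one then sets $\tilde{f}(x) = \sum_i \phi_i(x) f(y_i)$ on $X \setminus Y$ and $\tilde{f} = f$ on $Y$. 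Continuity of $\tilde{f}$ along $Y$ follows from the smallness of the $U_i$ approaching $Y$, while the Higson decay of $\tilde{f}$ is inherited from that of $f$ because each $r$-ball in $X$ meets only finitely many $U_i$ whose associated $y_i$ are pairwise close in $Y$.

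The main obstacle is this extension step in the second direction: the geometric construction is standard, but verifying that $\tilde{f}$ inherits the Higson decay requires careful bookkeeping to bound $\diam \tilde{f}(B_{d_X}(x, r))$ by the diameters of $f$ on comparable balls in $Y$, using both the geometry of the cover and the Higson property of $f$. Once the equivalence $\overline{Y} \sim Y^{\nu}$ is in hand, the final assertion that $Y^{\ast}$ is homeomorphic to $\nu Y$ is immediate, since removing $Y$ from equivalent compactifications yields homeomorphic remainders.
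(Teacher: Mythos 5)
The paper offers no proof of this proposition---it is imported verbatim from \cite[Theorem 1.4]{DKU}---so your argument has to stand on its own. The easy part and the direction $Y^{\nu}\succeq\overline{Y}$ are correct: your cluster-point argument (with subnets in place of subsequences, since $\overline{Y}$ is not metrizable) does show that every $g\in C(\overline{Y})$ restricts to a member of $C_H(Y)$. This direction is in fact available for free: $\overline{Y}$ is by construction the compactification of $Y$ generated by the family $\{h|_Y : h\in C_H(X)\}$, and $h|_Y\in C_H(Y)$ simply because $B_{d_Y}(y,r)\subset B_{d_X}(y,r)$, so $\overline{Y}\preceq Y^{\nu}$ without any extension argument.

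The genuine gap is in the direction $\overline{Y}\succeq Y^{\nu}$, exactly at the step you flag, and the reason you give for the Higson decay of $\tilde f$ is incorrect. If $U_i$ and $U_j$ both meet $B_{d_X}(x,r)$ and $D=d_X(x,Y)$ is large, the associated points satisfy only $d_X(y_i,y_j)\le 2D$ plus lower-order terms; they are \emph{not} pairwise close in $Y$, and the Higson property of $f$, which controls $\diam f(B_{d_Y}(y,s))$ only for each \emph{fixed} $s$, gives no bound on $|f(y_i)-f(y_j)|$. Concretely, take $X=\R^2$ and $Y$ the union of the two nonnegative coordinate half-axes, with $f=0$ on one axis and $f=1$ on the other (tapered near the origin): this $f$ is a Higson function on $Y$, yet near $x=(n,n)$ neighbouring pieces of the cover pick $y_i$ on different axes with $f(y_i)-f(y_j)=1$. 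Nothing in your construction prevents the partition of unity from switching from $0$ to $1$ over a fixed short distance inside a huge $U_i$, in which case $\diam\tilde f\bigl(B_{d_X}((n,n),2)\bigr)$ stays near $1$ for all $n$ and $\tilde f\notin C_H(X)$. What actually saves the construction is not closeness of the values $f(y_i)$ but slow variation of the $\phi_i$: one needs $\diam U_i$ comparable to $d_X(U_i,Y)$ from below as well as above, bounded multiplicity of the cover, and $\phi_i$ Lipschitz with constant of order $1/\diam U_i$, so that the oscillation of $\tilde f$ on an $r$-ball far from $Y$ is of order $r/d_X(x,Y)$; arranging all of this in an arbitrary proper metric space requires an argument you have not supplied. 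A route that avoids the extension entirely: by Smirnov's theorem a compactification of the metric space $Y$ is determined by which pairs of closed sets have disjoint closures; by Proposition \ref{prop:2.2} this relation for $\overline{Y}$ is coarse divergence computed in $X$, which for subsets of $Y$ coincides with coarse divergence computed in $(Y,d_X|_Y)$, i.e.\ with the relation determining $Y^{\nu}$.
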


\subsection{Maps in large scale geometry and Weighill's property $(\mathfrak{C})$.}
Here, we summarize the basic properties of maps used in large scale geometry.
Then we state Weighill's property $(\mathfrak{C})$ and related results.
\par
A (not necessarily continuous) function $f:X\to Y$ between metric spaces $X$ and $Y$
 is called a {\it coarse map}
 if $f$ satisfies the following two conditions:
\begin{enumerate}
\item[$\bullet$] $f$ is {\it uniformly expansive}, that is,
 there exists a non-decreasing function $\sigma : \Ray \to \Ray$
 such that
 $$
 d_Y (f(x), f(x'))\leq \sigma (d_X (x, x'))
 $$
 for every $x, x' \in X$
 (in this case, $f$ is said to be {\it $\sigma$-uniformly expansive}), and
\item[$\bullet$] $f$ is {\it metrically proper},
 that is,
 for any bounded subset $B\subset Y$,
 $f^{-1}(B)$ is bounded in $X$ with respect to the metric $d_X$.
\end{enumerate}
Two maps $f:X\to Y$ and $g:X\to Y$ are said to be {\it close}
 if there exists $r>0$ such that
 $d_Y (f(x), g(x))<r$
 for every $x\in X$.
A coarse map $f:X\to Y$ is called a {\it coarse equivalence}
 if there exists a coarse map $g:Y\to X$ such that
 $g\circ f$ and $f\circ g$ are close to $\mbox{id}_X$ and $\mbox{id}_Y$ respectively.
If there exists a coarse equivalence between $X$ and $Y$
 then $X$ and $Y$ are said to be {\it coarsely equivalent}.

A (not necessarily continuous) function $f:X\to Y$ between metric spaces $X$ and $Y$
 is said to be {\it uniformly metrically proper}
 if the following condition is satisfied:
\begin{enumerate}
 \setcounter{enumi}{2}
\item[$\bullet$] there exists a non-decreasing function $\tau : \Ray \to \Ray$
 such that 
 $$
 d_X (x, x')\leq \tau (d_Y (f(x), f(x')))
 $$
 for every $x, x' \in X$ (in this case, $f$ is said to be {\it $\tau$-uniformly metrically proper}).

\end{enumerate}

\begin{proposition}\label{prop:2.4}
Let $X$ and $Y$ be unbounded metric spaces.
Then a map $f:X\to Y$ is uniformly metrically proper if and only if
 there exists a non-decreasing function $\rho_{-}: \Ray \to \Ray$
 such that
 \begin{enumerate}
 \item $\displaystyle\lim_{t\to \infty} \rho_{-}(t)=\infty$,
 and
 \item $ \rho_{-}(d_X (x,x'))\leq d_Y (f(x), f(x')) $
 holds for every $x, x' \in X$.
 \end{enumerate}
\end{proposition}

\begin{proof}
Suppose that $f$ is uniformly metrically proper.
Then there exists a non-decreasing function $\tau : \Ray \to \Ray$
 such that 
\begin{align}
\tag*{$(2.1)$}\label{2.4-1}
 d_X (x, x')\leq \tau (d_Y (f(x), f(x')))
\end{align}
 for every $x, x' \in X$.
Since $X$ is unbounded,
 \ref{2.4-1} implies that $\displaystyle\lim_{t\to \infty} \tau(t)=\infty$.
Thus we can take a sequence $\{ n(i)\}_{i=1}^{\infty}\subset \N$
 such that
 $n(i)<n(i+1)$ and $0<\tau (n(i)) < \tau (n(i+1))$ for every $i\in \N$.
Then we define $\rho_{-}:\Ray \to \Ray$ by
\begin{equation*}
\rho_{-}(t)
 =
 \begin{cases}
   0 & \mbox{if}\ 0\leq t\leq \tau(n(1)),\\
   n(i)  & \mbox{if}\ \tau(n(i))<t\leq \tau(n(i+1)).
 \end{cases}
\end{equation*}
Then $\rho_{-}$ is a non-decreasing function with
 $\displaystyle\lim_{t\to \infty} \rho_{-}(t)=\infty$.
To check (2),
 we note that \ref{2.4-1} is equivalent to the condition that
\begin{align}
\tag*{$(2.2)$}\label{2.4-2}
d_X (x, x')>\tau (t) \Longrightarrow d_Y (f(x), f(x'))>t
\end{align}
 for every $x, x'\in X$.
Thus, if $\tau (n(i))<d_X (x,x')\leq \tau(n(i+1))$
 then $$d_Y (f(x), f(x'))>n(i)=\rho_{-}(d_X (x, x'))$$ by \ref{2.4-2}.
Thus, $(2)$ is satisfied.
 \par
Next, let $\rho_{-}: \Ray \to \Ray$ be a a non-decreasing function satisfying
 (1) and (2).
Then we can take a sequence $\{ m(i)\}_{i=1}^{\infty}\subset \N$
 such that
 $m(i)<m(i+1)$ and $0<\rho_{-} (m(i)) < \rho_{-} (m(i+1))$ for every $i\in \N$.
Note that (2) is equivalent to the condition that
\begin{align}
\tag*{$(2.3)$}\label{2.4-3}
d_Y (f(x), f(x'))<\rho_{-}(t) \Longrightarrow d_X (x, x')<t
\end{align}
 for every $x, x'\in X$.
Now we define $\tau :\Ray\to \Ray$ by
\begin{equation*}
\tau(t)
 =
 \begin{cases}
   m(1) & \mbox{if}\ 0\leq t<\rho_{-}(m(1)),\\
   m(i)  & \mbox{if}\  \rho_{-}(m(i-1))\leq t<\rho_{-}(m(i)).
 \end{cases}
\end{equation*}
Clearly, $\tau$ is non-decreasing.
If $\rho_{-}(m(i-1))\leq d_Y (f(x), f(y))<\rho_{-}(m(i))$
 then $$d_X (x, x')<m(i)=\tau(d_Y (f(x), f(x')))$$
 by \ref{2.4-3}.
Thus $f$ is uniformly metrically proper.
\end{proof}

\begin{proposition}\label{prop:2.5}
\cite[Proposition 3.8]{ALC1}
Any coarse equivalence between metric spaces is uniformly metrically proper.
\end{proposition}

A map $f$ is called a {\it rough map}
 if it is uniformly expansive and uniformly metrically proper.
If a rough map $f$ is $\sigma$-uniformly expansive and $\tau$-uniformly metrically proper
 then $f$ is called a {\it $(\sigma, \tau)$-rough map}.
In case $\sigma =\tau$, a $(\sigma, \tau)$-rough map $f$ is called a {\it $\sigma$-rough map}.
Note that if we define $\lambda=\max\{\sigma, \tau\}$
 then any $(\sigma, \tau)$-rough map is a $\lambda$-rough map.
A rough map $f:X\to Y$ is called a {\it rough equivalence}
 if there exists a rough map $g:Y\to X$ such that
 $g\circ f$ and $f\circ g$ are close to $\mbox{id}_X$ and $\mbox{id}_Y$ respectively.
A rough equivalence which is a $\sigma$-rough map is called a {\it $\sigma$-rough equivalence}.
\par
Let $r>0$.
A subset $Y$ of $X$ is called an {\it $r$-net} in $X$
 if for every $x\in X$, there is $y\in Y$
 such that $d_X (x, y)<r$.
A {\it net} of $X$ is a subset of $X$ which is an $r$-net for some $r>0$.

\begin{proposition}\label{prop:2.6}
\cite[Proposition 2.37]{ALC0}
If a map $f:X\to Y$ between metric spaces
 is a rough map and $f(X)$ is a net in $Y$
 then $f$ is a rough equivalence.
\end{proposition}

Thus, a net $A$ in a space $X$ is coarsely equivalent to $X$.

\begin{corollary}\label{cor:2.7}
\cite[Corollary 2.38]{ALC0}
For a map $f:X\to Y$ between metric spaces,
 the following conditions are equivalent:
\begin{enumerate}
\item $f$ is a rough map,
\item $f: X\to f(X)$ is a rough equivalence,
\item $f: X\to f(X)$ is a coarse equivalence.
\end{enumerate}
\end{corollary}

\begin{proposition}\label{prop:2.8}
(cf. \cite[Definition 1.4.4]{NY})
Let $X$ and $Y$ be unbounded metric spaces.
Then a map $f:X\to Y$
is a coarse equivalence if and only if
 $f$ satisfies the following two conditions:
\begin{enumerate}
\item $f(X)$ is a net in $Y$, and
\item  there exist non-decreasing functions $\rho_{+}$, $\rho_{-}: \Ray \to \Ray$
 such that 
 $$\displaystyle\lim_{t\to \infty} \rho_{-}(t)=\infty$$
 and the inequality
 $$
 \rho_{-}(d_X (x,x'))\leq d_Y (f(x), f(x')) \leq \rho_{+}(d_{X}(x, x'))
 $$
 holds for every $x, x' \in X$.
\end{enumerate}
\end{proposition}

\begin{proof}
If $f$ is a coarse equivalence
 then it is easy to see that $f(X)$ is a net in $Y$.
By Corollary \ref{cor:2.7}, $f$ is also a rough equivalence.
Say $f$ is a $\sigma$-rough equivalence.
Then $\rho_{+}=\sigma$ and the existence of $\rho_{-}$ is assured by Proposition \ref{prop:2.4}.
\par
Conversely, suppose that $f$ satisfies $(1)$ and $(2)$.
Then $f$ is $\rho_{+}$-uniformly expansive.
Also, it is uniformly metrically proper by Proposition \ref{prop:2.4}.
Thus $f$ is a coarse equivalence by Proposition \ref{prop:2.6}
 and Corollary \ref{cor:2.7}.
\end{proof}

The following was originally stated in \cite[ Proposition 2.41 and  Corollary 2.42]{Roe}
 for coarse maps between proper coarse spaces.
See \cite[Corollary 4.14, Proposition 4.15, and Theorem 4.16]{ALC1}
 for coarse maps between proper metric spaces.

\begin{proposition}\label{prop:2.9}
\cite{Roe} (cf. \cite{ALC1})
Let $X$ and $Y$ be proper metric spaces.
A coarse map $f: X\to Y$ extends to a  continuous map $\nu f: \nu X\to \nu Y$
 between their Higson coronae.
If $f, g : X\to Y$ are close coarse maps then $\nu f=\nu g$.
In particular, if $f: X\to Y$ is a coarse equivalence
 then $\nu f$ is a homeomorphism between their Higson coronae.
\end{proposition}

Let $(Y, d_Y)$ and $(Z, d_Z)$ be metric spaces with base points $y_0 \in Y$ and $z_0 \in Z$
 respectively.
The {\it coarse coproduct} of $(Y, d_Y)$ and $(Z, d_Z)$
 is the metric space $(Y+Z, d_{Y+Z})$ whose underlying set is the disjoint union of $Y$ and $Z$
 and the distance function $d_{Y+Z}$ is defined as follows:
\begin{equation*}
 d_{Y+Z}(a,b)
 =
 \begin{cases}
   d_Y (a,b) & \mbox{if}\ a,b\in Y,\\
   d_Z (a,b) & \mbox{if}\ a,b\in Z,\\
   d_Y (y_0, a)+1+d_Z (z_0, b) & \mbox{if}\ a \in Y,\  b\in Z,\\
   d_Z (z_0, a)+1+d_Y (y_0, b)  & \mbox{if}\ a\in Z,\  b\in Y.\  
 \end{cases}
\end{equation*}

For a metric space $X$, we consider the following property defined by Weighill \cite{Wei}:
\begin{list}{}{}
\item[$(\mathfrak{C})$]
  for every coarse map $f:X\to Y+Z$,
  we can take $A\in \{Y, Z\}$
  and a coarse map $g_A : X\to A$ such that $i_A \circ g_A : X\to Y+Z$ is close to $f$,
  where $i_A:A\to Y+Z$ is the inclusion.
\end{list}

Although the Higson compactification is defined for proper metric spaces,
 the Higson corona can be defined for arbitrary metric spaces.
Then Weighill proved the following:

\begin{theorem}\label{thm:2.10}
\cite[Theorem 4.6]{Wei}
Let $X$ be a non-compact metric space.
The Higson corona $\nu X$ of $X$ is topologically connected
 if and only if $X$ satisfies $(\mathfrak{C})$.
\end{theorem}

The following is a consequence of \cite[Theorem 4.1]{Wei}, \cite[Proposition 4.5]{Wei},
 and Proposition \ref{prop:2.3}.

\begin{proposition}\label{prop:2.11}
If a proper metric space $X$ does not satisfy $(\mathfrak{C})$
 then there are two unbounded closed sets $A$ and $B$ of $X$ such that
\begin{enumerate}
\item $X=A\cup B$,
\item $A$ and $B$ diverges coarsely in $X$, and
\item the Higson corona $\nu X$ is homeomorphic to
 the topological sum of the Higson coronae of $A$ and $B$,
 i.e., $\nu X \approx \nu A \oplus \nu B$.
\end{enumerate}
\end{proposition}


\section{Indecomposable continua as Higson coronae}\label{indec}
In this section,
 we consider spaces whose Higson coronae are indecomposable continua.
\par

A {\it continuum} is a non-empty, compact and topologically connected Hausdorff space.
A {\it subcontinuum} is a continuum which is a subset of a continuum.
A {\it proper} subset of $X$ is a subset of $X$ which is not equal to $X$.
A continuum is called {\it decomposable}
 if it can be represented as the union of two of its proper subcontinua.
A continuum which is not decomposable is said to be {\it indecomposable}.
\par
An example of a space whose Higson corona is an indecomposable continuum is the following:

\begin{theorem}\label{thm:3.1}
\cite[Theorem 1.6]{IT}
The Higson corona $\nu\Ray$ is a non-metrizable indecomposable continuum.
\end{theorem}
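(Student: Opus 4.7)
The plan is to verify three features of $\nu\Ray$ in turn: that it is a continuum, that it is non-metrizable, and that it is indecomposable. Non-emptiness and compactness are immediate from $\Ray$ being a non-compact proper metric space. For connectedness I would invoke Weighill's criterion (Theorem~\ref{thm:2.4}) and verify property $(\mathfrak{C})$ for $\Ray$. Given a coarse map $f\colon\Ray\to Y+Z$ with control function $\sigma$, any $x$ for which $f(x)\in Y$ while $f(x+1)\in Z$ satisfies $d_{Y+Z}(f(x),f(x+1))=1+d_Y(y_0,f(x))+d_Z(z_0,f(x+1))$, which is bounded above by $\sigma(1)$. Both values therefore lie in a fixed bounded subset of $Y+Z$, and metric properness of $f$ confines such ``crossings'' to a bounded initial segment of $\Ray$. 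Beyond this segment $f$ takes values in a single summand, so $f$ is close to a coarse map into that summand and $(\mathfrak{C})$ holds.

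For non-metrizability I would exhibit a closed subspace of $\nu\Ray$ homeomorphic to $\beta\N\setminus\N$. Take the coarsely discrete set $S=\{2^n:n\in\N\}\subset\Ray$ with the induced metric; since consecutive gaps in $S$ grow without bound, for every $r>0$ one has $B_{d_{\Ray}}(s,r)\cap S=\{s\}$ for all $s\in S$ sufficiently large, so every bounded function on $S$ automatically satisfies $(\ast)_r^f$. Hence $C_H(S)=C_b(S)$, and the Higson compactification of $S$ coincides with its Stone-\v{C}ech compactification. Proposition~\ref{prop:2.3} then identifies $S^{\ast}$ with a closed subspace of $\nu\Ray$ homeomorphic to $\beta\N\setminus\N$, which is non-metrizable by classical results.

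The main obstacle is indecomposability, which I would establish by showing that every proper subcontinuum of $\nu\Ray$ has empty interior. Suppose for contradiction that $K\subsetneq\nu\Ray$ is a subcontinuum with non-empty interior, and pick $q\in\nu\Ray\setminus K$. By normality of the compact Hausdorff space $\Ray^{\nu}$, choose a continuous $h\colon\Ray^{\nu}\to[0,1]$ with $h(q)=0$ and $h\equiv 1$ on a closed neighborhood of $K$. Then $h|_{\Ray}$ is a Higson function whose canonical extension to $\Ray^{\nu}$ is $h$ by Proposition~\ref{prop:2.1}. Both $A=\{x\in\Ray:h(x)\ge 3/4\}$ and $B=\{x\in\Ray:h(x)\le 1/4\}$ must be unbounded, and the linear order on $\Ray$ together with continuity of $h$ forces $A$ and $B$ to interleave; extracting an intermediate point with $h=1/2$ between each alternation yields an unbounded set $C\subset\Ray$ whose trace $C^{\ast}\subset h^{-1}(\{1/2\})\cap\nu\Ray$ is disjoint from both $K$ and a neighborhood of $q$. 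Iterating this construction with refined level sets and sub-intervals would produce a descending family of subcontinua converging to $q$ while each meeting $K$ non-trivially, contradicting $q\notin K$.

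I expect this iteration to be the genuine technical difficulty: at each stage the newly constructed level sets must remain coarsely non-divergent (in the sense of Proposition~\ref{prop:2.2}) from both $K$ and a chosen sequence converging to $q$, while preserving the connectivity of their closures in $\Ray^{\nu}$. This is where the argument effectively transports Bellamy's composant-theoretic proof for $\beta\Ray\setminus\Ray$ into the Higson setting. The one-ended character of $\Ray$ enters precisely through property $(\mathfrak{C})$: since $\Ray$ admits no coarse decomposition into two divergent pieces, any attempt to split $\nu\Ray$ into a proper subcontinuum with interior and its complement generates an unavoidable alternation pattern along the ray, and it is this pattern, converted via the Higson control condition, that ultimately yields the indecomposability.
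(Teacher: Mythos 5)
The paper does not actually prove this statement; it quotes it from \cite{IT}, so there is no internal proof to compare against, and your proposal must stand on its own. Its first two parts do: verifying property $(\mathfrak{C})$ for $\Ray$ by the bounded-crossing argument (any pair of points at distance $\leq 1$ mapped to different summands is forced into $f^{-1}$ of a $\sigma(1)$-ball around the basepoints, hence into a bounded set) gives connectedness via Theorem~\ref{thm:2.4}, and the sparse set $S=\{2^n\}$ with $C_H(S)=C_b(S)$ embeds a copy of $\beta\N\setminus\N$ as a closed subspace of $\nu\Ray$ via Proposition~\ref{prop:2.3}, which settles non-metrizability.

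The indecomposability argument, however, has a genuine gap, which you partly acknowledge. Two concrete problems. First, the set $C$ you extract, one point with $h=1/2$ per alternation of $A$ and $B$, carries no connectivity: there is no reason for $C^{\ast}$ to be a continuum, and if the alternations are sparse then by exactly your non-metrizability argument $C^{\ast}$ is a copy of $\beta\N\setminus\N$, which is totally disconnected. Subcontinua of $\nu\Ray$ have to be manufactured as traces of unions of intervals $\bigcup_n[a_n,b_n]$ with lengths $b_n-a_n\to\infty$ calibrated against the gaps, and their connectedness then needs an argument of the type of Lemma~\ref{lem:3.5}; none of this appears in the sketch. Second, and more fundamentally, the ``iteration'' that is supposed to produce a descending family of subcontinua with intersection $\{q\}$, each meeting $K$, is the entire content of the theorem and is not described: every member of such a family must simultaneously contain $q$ and intersect $K$, and you give no mechanism for forcing a subcontinuum built from level sets of $h$ through the prescribed corona point $q$ while keeping it attached to $K$, nor for making the family shrink to a singleton (a trace $E^{\ast}$ of an unbounded subset of $\Ray$ is never a single point, so the shrinking cannot be achieved by traces alone and requires a transfinite or ultrafilter-style refinement). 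Deferring this step to ``transporting Bellamy's composant-theoretic proof into the Higson setting'' is not a proof; that transport is precisely the content of the cited theorem of \cite{IT}.
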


Since the inclusion $\N \to \Ray$ is a coarse equivalence,
 $\nu \N$ is homeomorphic to $\nu \Ray$.
Hence, we have the following:

\begin{corollary}\label{cor:3.2}
The Higson corona $\nu\N$ is a non-metrizable indecomposable continuum
 which is homeomorphic to $\nu \Ray$.
\end{corollary}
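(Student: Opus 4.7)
The plan is to deduce the corollary directly from Theorem \ref{thm:3.1} by exhibiting the inclusion $i:\N\hookrightarrow \Ray$ as a coarse equivalence and then invoking the general fact, quoted in the Preliminaries, that a coarse equivalence between proper metric spaces induces a homeomorphism between the associated Higson coronae.

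First I would verify the two conditions (4) and (5) for $i$ to be a coarse equivalence. Since the metric on $\N$ is induced from that on $\Ray$, the inclusion is an isometric embedding, so condition (5) holds trivially with $\rho_{-}(t)=\rho_{+}(t)=t$. For condition (4), given any $y\in\Ray$ there exists $n\in\N$ with $|y-n|\leq 1$, so $\N=i(\N)$ is a $1$-net in $\Ray$. Hence $i$ is a coarse equivalence.

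Next, I would apply the fact, recalled in Section \ref{prelim}, that the coarse equivalence $i$ induces a homeomorphism $\nu i:\nu\N \to \nu\Ray$. Combining this with Theorem \ref{thm:3.1} transports all the required topological properties from $\nu\Ray$ to $\nu\N$: being a continuum, being non-metrizable, and being indecomposable are each homeomorphism invariants.

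There is essentially no obstacle here beyond unpacking the definitions; the content of the corollary is entirely carried by the isometric inclusion being a $1$-net and by the functoriality of the Higson corona under coarse equivalence. If a more self-contained argument were desired, one could alternatively identify $\N$ with a closed subspace of $\Ray$ and invoke Proposition \ref{prop:2.3} together with a direct check that $\N^{\ast}=\nu\Ray$, but the coarse-equivalence route is the shortest.
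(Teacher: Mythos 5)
Your proposal is correct and matches the paper's argument exactly: the paper likewise observes that the inclusion $\N\to\Ray$ is a coarse equivalence (being an isometric embedding onto a $1$-net) and then transports the properties of $\nu\Ray$ from Theorem \ref{thm:3.1} via the induced homeomorphism of Higson coronae. Your extra verification of conditions (4) and (5) just fills in details the paper leaves implicit.
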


\begin{remark}
The subpower Higson compactification was introduced in \cite{KZ1}
 as a variant of the Higson compactification.
It is known that the subpower Higson corona of $\Ray$ is also a non-metrizable indecomposable continuum
 \cite{Iwa}.
\end{remark}

Let $\mu>0$ and let $X$ be a metric space.
A subset $Y$ of $X$ is said to be {\it $\mu$-connected}
 if, for every two points $x$ and $y$ of $Y$,
  there exists a finite sequence $\{p_i\}_{i=1}^{n}$ in $Y$ such that
 $p_1 =x$, $p_n =y$ and $d_X(p_{i}, p_{i+1})\leq\mu$ for every $i$.

\begin{lemma}\label{lem:3.4}
Let $(X, d_X)$ be a non-compact proper metric space.
If there exists a coarse equivalence $f: M\to X$ from a geodesic metric space $M$
 then there exist a positive number $\mu>0$ and a non-decreasing function $\tau: \Ray \to \Ray$
 with $\displaystyle\lim_{t\to \infty}\tau(t)=\infty$ such that,
 for each unbounded sequence $\{y_n \}_{n=1}^{\infty}\subset f(M)$,
 there exist a subsequence $\{x_n \}_{n=1}^{\infty}\subset \{y_n \}_{n=1}^{\infty}$
 and a family $\{B_n \}_{n=1}^{\infty}$ of subsets of $X$ satisfying the following:
\begin{enumerate}
\setcounter{enumi}{0}
\item[$(\mbox{A})$] $\diam B_n \leq \tau (n)$,
\item[$(\mbox{B})$] $B_n$ is $\mu$-connected,
\item[$(\mbox{C})$] $B_n$ contains the $n$-ball $B_{d_X}(x_n, n)$
 for every $n\in \N$, and
\item[$(\mbox{D})$] $\displaystyle\lim_{n\to \infty} d_X(x_0, B_n)=\infty$.
\end{enumerate}
\end{lemma}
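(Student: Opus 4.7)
The plan is to realize each $B_n$ as a small $r$-neighborhood (in $X$) of the $f$-image of a metric ball in $M$ centered at a preimage of $x_n$: the geodesic structure of $M$ supplies the $\mu$-chains, uniform metric properness of $f$ forces the containment $B_n\supseteq B_{d_X}(x_n,n)$, and thinning $\{y_n\}$ to grow faster than the resulting diameter bound takes care of (D).

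I would first record the data attached to the coarse equivalence $f:M\to X$. Realizing $f$ as a rough equivalence, there are non-decreasing functions $\sigma,\rho:\Ray\to\Ray$ with
$$d_X(f(a),f(b))\le\sigma(d_M(a,b))\quad\text{and}\quad d_M(a,b)\le\rho(d_X(f(a),f(b))),$$
and $f(M)$ is $r$-dense in $X$ for some $r>0$. Both $\sigma$ and $\rho$ tend to $\infty$: since $X$ is proper and non-compact it is unbounded, so its net $f(M)$ is unbounded, which forces $\sigma$ unbounded; and $M$, being coarsely equivalent to the unbounded $X$, is itself unbounded, which forces $\rho$ unbounded; non-decreasing unbounded functions on $\Ray$ tend to $\infty$. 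Set
$$\mu:=\max\{r,\sigma(1)\}\quad\text{and}\quad\tau(t):=\sigma(2\rho(t+r))+2r,$$
which depend only on $f,M,X$, and $\tau$ is non-decreasing with $\tau(t)\to\infty$.

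Given an unbounded sequence $\{y_n\}\subset f(M)$, extract a subsequence $\{x_n\}$ with $d_X(x_0,x_n)\ge n+\tau(n)$, and pick $a_n\in M$ with $f(a_n)=x_n$. Put $R_n:=\rho(n+r)$ and
$$B_n:=N_{d_X}\bigl(f(B_{d_M}(a_n,R_n)),r\bigr).$$
Condition (A) is immediate: $\diam B_n\le\sigma(2R_n)+2r=\tau(n)$. For (C), if $x\in B_{d_X}(x_n,n)$ then the net property produces $a\in M$ with $d_X(f(a),x)\le r$, giving $d_X(f(a_n),f(a))\le n+r$ and hence $d_M(a_n,a)\le\rho(n+r)=R_n$, so $a\in B_{d_M}(a_n,R_n)$ and $x\in B_n$. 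Condition (D) is forced by the choice of subsequence: $d_X(x_0,B_n)\ge d_X(x_0,x_n)-\diam B_n\ge n$.

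The only delicate step is (B). Given $x,y\in B_n$, pick $a,b\in B_{d_M}(a_n,R_n)$ with $d_X(f(a),x),d_X(f(b),y)\le r$, and use that $M$ is geodesic: the geodesic from $a$ to $a_n$ has monotonically non-increasing distance to $a_n$ and so stays inside $B_{d_M}(a_n,R_n)$; sampling it at unit spacing yields a chain $a=p_0,p_1,\dots,p_\ell=a_n$ in $B_{d_M}(a_n,R_n)$ with $d_M(p_i,p_{i+1})\le 1$, and likewise from $a_n$ to $b$. The image chain $x,f(a),f(p_1),\dots,f(a_n),\dots,f(b),y$ then lies in $B_n$ with all jumps bounded by $\mu$. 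The obstacle I expect is precisely this: the naive direct $M$-geodesic from $a$ to $b$ may leave $B_{d_M}(a_n,R_n)$, which would force enlarging the ball and thereby spoil (A); routing the chain through the center $a_n$ is the trick that closes this loop and makes the whole choice of $R_n$ and $\tau$ self-consistent.
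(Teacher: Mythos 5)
Your proposal is correct and follows essentially the same route as the paper: take $B_n$ to be the $r$-neighborhood of the $f$-image of a ball in $M$ of radius $\rho(n+r)$ about a preimage of $x_n$, with (A)--(D) verified exactly as in the paper's proof (the paper merges your $\sigma$ and $\rho$ into a single function and uses $\mu=\sigma(1)+r$, but these are cosmetic differences). Your explicit justification of (B) by chaining through the center $a_n$ is the correct reading of the paper's brief assertion that geodesic balls are $1$-connected.
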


\begin{proof}
Let $f: M\to X$ be a coarse equivalence from a geodesic metric space $(M, d_M)$.
Since any coarse equivalence is a rough equivalence,
 we may assume that $f$ is a $\sigma$-rough equivalence
 for some non-decreasing function $\sigma :\Ray \to \Ray$,
 that is,
\begin{enumerate}
\item\label{r-1} $d_M (x,y)\leq \sigma(d_X(f(x), f(y)))$, and
\item\label{r-2} $d_X (f(x), f(y))\leq \sigma (d_M (x, y))$
\end{enumerate}
for every $x, y\in M$.
Note that $\displaystyle\lim_{t\to \infty}\sigma(t)=\infty$
 since $X$ is a non-compact proper metric space.
By Proposition \ref{prop:2.8},
 we may assume that $f(M)$ is an $r$-net in $X$ for some $r>0$.
Let $\{y_n \}_{n=1}^{\infty}\subset f(M)$ be an unbounded sequence.
Then we can take a subsequence $\{ x_n \}_{n=1}^{\infty}$ of $\{y_n \}_{n=1}^{\infty}$
 such that
\begin{enumerate}
\setcounter{enumi}{2}
\item\label{r-3}
 $d_X(x_n, x_0)>n+\sigma^{2}(n+r)$
\end{enumerate}
 for each $n$, where $\sigma^{2}(n+r)=\sigma (\sigma (n+r))$.
Let $\{ v_n \}_{n=1}^{\infty}$ be a sequence in $M$ such that
$$
f(v_n)=x_n
$$
for every $n$.
Put 
$$D_n =B_{d_M}(v_n, \sigma(n+r))$$
 for each $n\in \N$.
Then we define $B_n$ as the $r$-neighborhood of $f(D_n)$, that is,
 $$
 B_n =N_{d_X}(f(D_n) , r).
 $$
Define $\tau : \Ray \to \Ray$ by
 $$ \tau(t)=2(\sigma^2 (t+r)+r). $$
Note that $\tau$ is a non-decreasing function with $\displaystyle\lim_{t\to \infty}\tau(t)=\infty$.
Then we have 
\begin{align*}
\diam B_n &\leq \diam f(D_n) +2r \\
&\leq 2\sigma (\sigma (n+r))+2r=\tau (n).
\end{align*}
Thus, $(\mbox{A})$ is satisfied.
\par

Put $\mu=\sigma(1)+r.$
Since $M$ is a geodesic metric space,
 each ball $D_n$ is $1$-connected.
So, $B_n$ is $\mu$-connected since $f(D_n)$ is $\sigma(1)$-connected.
Thus, $(\mbox{B})$ is satisfied.
\par
To check $(C)$,
 let $x\in B_{d_X}(x_n, n)$.
Since $f(M)$ is an $r$-net in $X$,
 we can take $y=f(m)\in f(M)$, $m\in M$, such that $d_X (x, y)<r$.
Then
$$
 d_X (f(v_n), f(m))=d_X(x_n, y)
 \leq d_X (x_n, x)+d_X (x, y)< n+r.
$$
By $(\ref{r-1})$, we have $d_M (v_n, m)\leq \sigma (n+r)$.
So we have
$$y=f(m)\in f(B_{d_M}(v_n, \sigma(n+r)))=f(D_n),$$
 that is,
 $x\in N_{d_X}(f(D_n) , r)=B_n$.
Thus, $(C)$ is satisfied.
\par
Finally, we shall check $(\mbox{D})$.
Let $x\in B_n$.
We take $y=f(m)\in f(D_n)$, $m\in D_n\subset M$,
 such that $d(x, y)<r$.
Then we have
 \begin{align*}
 d_X(x, x_n)&\leq d_X (x,y)+d_X (y, x_n)= d_X (x,y)+d_X(f(m), f(v_n))\\
 &< r+\sigma(d_M (m, v_n))\leq  r+\sigma (\sigma (n+r))=r +\sigma^2 (n+r).
 \end{align*}
Using $(\ref{r-3})$, we have
 \begin{align*}
 d_X(x, x_0)&\geq d_X(x_n, x_0 )-d_X(x, x_n)\\
 &> n+\sigma^2 (n+r)-(r+\sigma^2(n+r))= n-r.
 \end{align*}
So we have $\displaystyle\lim_{n\to \infty} d_X(x_0, B_n)=\infty$,
 $(\mbox{D})$ is satisfied.
\end{proof}

\begin{lemma}\label{lem:3.5}
Let $\mu>0$.
Let $(X, d_X)$ be a non-compact proper metric space and
 let $\Gamma $ be a closed subset of $X$ such that $\Gamma^{\ast}$ is connected.
Suppose that $\{B_n\}_{n=1}^{\infty}$ is a family of $\mu$-connected bounded subsets of $X$
 such that
\begin{enumerate}
\item[(i)] $\Gamma \cap B_n \neq \emptyset$ for every $n$, and
\item[(ii)] $\displaystyle\lim_{n\to \infty}d_X(x_0, B_n)=\infty.$
\end{enumerate}
Let $E=\Gamma\cup (\cup_{n=1}^{\infty} B_n)$.
Then $E^{\ast}$ is a continuum.
\end{lemma}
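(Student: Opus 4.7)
The plan is to show $E^{\ast}$ is non-empty, compact, and connected. Non-emptiness follows from the unboundedness of $E$ (guaranteed by condition (ii)) together with Proposition \ref{prop:2.3} applied to the closure of $E$ in $X$; compactness is automatic as a closed subset of the compact space $X^{\nu}$. The crux is connectedness, which I would attack by contradiction using Theorem \ref{thm:2.4} combined with Proposition \ref{prop:2.5}. Assuming $E^{\ast}$ is disconnected, one obtains closed subsets $A, B \subset E$ with $E = A \cup B$, both $A$ and $B$ unbounded, and $A, B$ diverging coarsely in $E$; the aim is to derive a contradiction.

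The first step is to restrict this decomposition to $N$. The sets $N \cap A$ and $N \cap B$ are closed in $N$, cover $N$, and inherit coarse divergence, since $N_{d_X}(N \cap A, r) \cap N_{d_X}(N \cap B, r) \subset N_{d_X}(A, r) \cap N_{d_X}(B, r)$, which is bounded. Because $N^{\ast}$ is connected, Proposition \ref{prop:2.2} rules out $(N \cap A)^{\ast}$ and $(N \cap B)^{\ast}$ being simultaneously non-empty, so one of $N \cap A, N \cap B$ --- say $N \cap B$ --- must be bounded.

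The next step uses the $\mu$-connectedness of each $B_n$ to show that $B_n \subset A$ for all sufficiently large $n$. If some $B_n$ met both $A$ and $B$, a $\mu$-chain in $B_n$ joining a point of $A$ to a point of $B$ would provide adjacent points $q_i \in A$ and $q_{i+1} \in B$ with $d_X(q_i, q_{i+1}) \leq \mu$, placing $q_i$ in $N_{d_X}(A, \mu) \cap N_{d_X}(B, \mu)$, which is bounded by coarse divergence. This contradicts $d_X(x_0, B_n) \to \infty$ for $n$ large, so each such $B_n$ lies entirely in one of $A$ or $B$. Since by condition (i) we may pick $p_n \in N \cap B_n$, and since $N \cap B$ is bounded while $B_n$ recedes to infinity, $p_n \in N \cap A$ for large $n$, forcing $B_n \subset A$.

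Finally, assembling the pieces: outside the bounded set $(N \cap B) \cup \bigcup_{n < n_0} B_n$, every point of $E$ lies in $A$, so $B$ is contained in a bounded set together with $A \cap B$; but $A \cap B \subset N_{d_X}(A, 1) \cap N_{d_X}(B, 1)$ is itself bounded by coarse divergence. Hence $B$ is bounded, contradicting the hypothesis. The crucial step is the $\mu$-chain transition argument of the preceding paragraph --- this is the only place where the $\mu$-connectedness hypothesis is genuinely used --- while all other steps are routine bookkeeping with Weighill's machinery and Proposition \ref{prop:2.2}.
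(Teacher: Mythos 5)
Your proof is correct, but it takes a genuinely different route from the paper's. The paper verifies Weighill's property $(\mathfrak{C})$ for $E$ directly: given an arbitrary coarse map $f:E\to Y+Z$, it uses the connectedness of $N^{\ast}$ (via Theorem \ref{thm:2.4}) to assume $f|_N$ lands in $Y$ off a bounded set, propagates this along $\mu$-chains through each $B_n$, and then checks by hand that the modification $g:E\to Y$ is still uniformly expansive, metrically proper, and close to $f$. You instead argue by contradiction through Proposition \ref{prop:2.5}: a disconnection of $E^{\ast}$ produces an unbounded closed decomposition $E=A\cup B$ diverging coarsely, which you rule out. The key mechanism is the same in both arguments --- a $\mu$-chain inside a far-away $B_n$ cannot cross a coarsely diverging pair, and condition (i) anchors each $B_n$ to the side carrying $N^{\ast}$ --- but your version is purely set-theoretic and avoids the bookkeeping needed to certify that the modified map is coarse, at the price of invoking the stronger structural statement of Proposition \ref{prop:2.5} rather than only the characterization of Theorem \ref{thm:2.4}. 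One small point to tighten: since $A\cap B$ need not be empty, the claim that each distant $B_n$ lies entirely in $A$ or entirely in $B$ should be run with points of $A\setminus B$ and $B\setminus A$ (take the last chain point lying in $A$ and its successor); your final paragraph in effect already absorbs $A\cap B$ into a bounded set, so the argument closes correctly.
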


\begin{proof}
To see the connectivity of $E^{\ast}$,
 it suffices to show that $E$ satisfies $(\mathfrak{C})$
 by Proposition \ref{prop:2.3} and Theorem \ref{thm:2.10}.
Let $(Y, d_Y)$ and $(Z, d_Z)$ be metric spaces with base points $y_0 \in Y$ and $z_0 \in Z$ respectively.
Let $f: E \to Y+Z$ be a coarse map into a coarse coproduct space $Y+Z$.
We may assume that $f$ is $\sigma$-uniformly expansive for some non-decreasing function
 $\sigma: \Ray \to \Ray$, i.e., 
 $$d_{Y+Z}(f(x), f(y))\leq \sigma(d_X (x,y))$$
 for every $x, y\in E$.
Since $\Gamma^{\ast}$ is connected,
 using Theorem \ref{thm:2.10},
 we may assume without loss of generality that
 there is a coarse map 
 $$g: \Gamma\to Y$$
 such that $i_Y\circ g$ and $f|_\Gamma$ are $r_0$-close
 for some $r_0>0$,
 where $i_Y: Y\to Y+Z$ is the inclusion.
Since $g$ is metrically proper,
 there exists $r_1>0$ such that
 $$g^{-1}(B_{d_Y}(y_0, r_0))\subset  B_{d_X}(x_0, r_1).$$
Then $d_Y (g (x), y_0)>r_0$ for every $x\in \Gamma\setminus B_{d_X}(x_0 , r_1)$.
So we have
\begin{enumerate}
\setcounter{enumi}{0}
\item\label{cc-1}
 $f(x)\in Y$ for every $x\in \Gamma\setminus B_{d_X}(x_0, r_1)$.
\end{enumerate}
Indeed, if there exists a point $x\in \Gamma\setminus B_{d_X}(x_0, r_1)$ such that $f(x)\in Z$
 then
\begin{align*}
d_{Y+Z}(i_Y \circ g (x), f(x))
&=d_Y (g(x), y_0)+1+d_Z (f(x), z_0)\\
&> r_0 +1 >r_0.
\end{align*}
This contradicts the fact that $i_Y\circ g$ and $f|_\Gamma$ are $r_0$-close.
\par

Since $f$ is metrically proper, we can take $r_2 >r_1$ so that
\begin{enumerate}
\setcounter{enumi}{1}
\item\label{cc-2}
 $f^{-1} (B_{d_{Y+Z}}(y_0, \sigma(\mu)))\subset B_{d_X}(x_0, r_2)$.
\end{enumerate}
Put
 $$\Lambda =\{ k \in \N\mid  B_k\cap B_{d_X}(x_0, r_2)\neq\emptyset\}.$$
Then $\Lambda$ is a finite set since $\displaystyle \lim_{n\to \infty}d_X(x_0, B_n)=\infty$.
Put
 $$B=\left(E\cap  B_{d_X}(x_0, r_2)\right) \cup (\cup_{k\in \Lambda}B_k) .$$
Note that $B$ is a bounded set in $E$ since $\Lambda$ is finite.
Then it follows that
\begin{enumerate}
\setcounter{enumi}{2}
\item\label{cc-3}
 $f(E\setminus B)\subset Y$.
\end{enumerate}
Indeed, if $x\in \Gamma\setminus B\subset E\setminus B$ then $f(x)\in Y$ by $(1)$.
Suppose $x\in E\setminus (\Gamma\cup B)$.
Then there exists $k\in \N \setminus \Lambda$
 such that $x\in B_k\subset E\setminus B_{d_X}(x_0, r_2)$.
Since $\Gamma\cap B_k\neq \emptyset$,
 we can take a point $b_k \in \Gamma\cap B_k$.
Note that $f(b_k )\in Y$ by $(1)$.
Since $B_k$ is $\mu$-connected,
 there is a finite sequence $\{ p_i\}_{i=1}^{n}\subset B_k$
 such that $p_1=b_k$, $p_n =x$ and $d_{X}(p_{i-1}, p_{i})\leq\mu$ for every $i$.
Then
 $$d_{Y+Z}(f(p_i), f(p_{i+1})) \leq \sigma (\mu)$$
 for every $i$.
If $f(x)$ is contained in $Z$
 then
 there is $1<j\leq n$ such that $f(p_j)\in Z$ and $f(p_{j-1})\in Y$
 since $f(p_1)=f(b_k)\in Y$ and $f(p_n )=f(x)\in Z$.
Since $p_{j-1} \not\in B_{d_X}(x_0, r_2)$,
 we have
 $d_{Y}(f(p_{j-1}),y_0)>\sigma(\mu)$
 by $(2)$.
So we have
\begin{align*}
d_{Y+Z}(f(p_{j-1}), f(p_{j}))
& =d_{Y}(f(p_{j-1}),y_0)+1+d_{Z}(f(p_{j}), z_0)\\
& >\sigma(\mu)+1>\sigma (\mu)\geq \sigma(d_X (p_{j-1}, p_j)),
\end{align*}
 contradicting the $\sigma$-uniform expansiveness of $f$.
Hence,
 $f(x)\in Y$.
\par

Now we define $h:E \to Y$
 by $h|_{E\setminus B}=f|_{E\setminus B}$ and $h(x)=y_0$ for every $x\in B$.
The function $h$ is well-defined by $(3)$.
Since $B$ is a bounded set,
 $h$ is metrically proper and close to $f$.
To see that $h$ is uniformly expansive,
 we fix $b_0 \in B$
 and define a function $\tau : \Ray \to \Ray$
  by 
  $$\tau(t)= \sigma(t) +d_Y (y_0, f(b_0))+\sigma (\diam B)$$
  for every $t\in \Ray$.
Clearly $\tau$ is a non-decreasing function such that
 $\sigma(t)\leq \tau(t)$ for every $t\in \Ray$.
Then for $x\in B$ and $x'\in E\setminus B$,
 we have
\begin{align*}
d_Y (h(x), h(x'))&=d_Y(y_0, f(x'))\\
&\leq d_Y (y_0, f(b_0))+d_Y (f(b_0), f(x))+d_Y (f(x), f(x'))\\
&\leq d_Y (y_0, f(b_0))+\sigma (\diam B)+\sigma(d_X (x,x'))\\
&=\tau(d_X (x,x')).
\end{align*}
This establishes that $h$ is $\tau$-uniformly expansive.
Thus $h$ is a coarse map close to $f$.
Hence, $E$ satisfies $(\mathfrak{C})$.
\end{proof}

The next lemma follows from the proof of \cite[Theorem 3.11]{ALC1} since any geodesic metric space is coarsely quasi-convex
 and any coarse quasi-isometry induces a coarse equivalence (cf. \cite[Proposition 3.25]{ALC0}).

\begin{lemma}\label{lem:3.6}
\cite{ALC1}
If $X$ is a geodesic metric space
 then there exist
 a connected graph $G$
 and a coarse equivalence $f:V(G)\to X$ from the vertices $V(G)$ of $G$,
 where the metric on $V(G)$ is induced by the path metric on $G$
 such that each edge of which is isometric to the unit interval.
\end{lemma}

A space $X$ is said to have {\it coarse bounded geometry} if there exist a net $A$ of $X$ and
 a function $\lambda: \Ray \to \Ray$
  such that $\#(A \cap B(x, t))\leq \lambda(t)$ for every $x\in X$ and $t>0$,
  where $\#(A \cap B(x, t))$ denotes the number of elements of $A \cap B(x, t)$.

It is known that if $X$ is the metric space of vertices of a connected graph $G$
 then $X$ has coarse bounded geometry if and only if $G$ is of {\it finite type},
 that is, there is a positive integer $K$ such that every vertex of $G$ has degree bounded by $K$
 (cf. \cite[Example 3.7]{ALC0}).

\begin{proposition}[\cite{ALC0}, Proposition 3.10]\label{prop:3.7}
Let $X$ and $Y$ be metric space.
If $X$ has coarse bounded geometry and 
 $Y$ is coarsely equivalent to $X$ then $Y$ has coarse bounded geometry.
\end{proposition}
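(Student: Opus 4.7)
The plan is to take a controlled net $A\subset X$ witnessing coarse bounded geometry of $X$ and push it forward through $f$ to obtain a candidate net $f(A)\subset Y$; then show both that $f(A)$ is genuinely a net in $Y$ and that it admits a control function. Since a coarse equivalence is the same as a rough equivalence, I will fix once and for all a coarse inverse $g:Y\to X$ together with constants $r_0>0$ and non-decreasing functions $\rho_+,\sigma_g:\Ray\to\Ray$ so that $f$ is $\rho_+$-uniformly expansive, $g$ is $\sigma_g$-uniformly expansive, $f(X)$ is an $r_0$-net in $Y$, and $g\circ f$ is $r_0$-close to $\mathrm{id}_X$. Let $\lambda_X$ be the control function for the given net $A$ in $X$.

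First, I would verify that $f(A)$ is a net in $Y$. Given $y\in Y$, pick $x\in X$ with $d_Y(y,f(x))<r_0$ since $f(X)$ is a net, then pick $a\in A$ with $d_X(x,a)<s$ where $s$ is the net-constant of $A$ in $X$. Applying $\rho_+$ yields
\begin{equation*}
d_Y(y,f(a))\leq r_0+\rho_+(s),
\end{equation*}
so $f(A)$ is an $(r_0+\rho_+(s))$-net in $Y$.

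Next, for the bounded-geometry estimate, I would analyze $\#(f(A)\cap B_{d_Y}(y,t))$. Each point of this intersection has at least one preimage in $A$, so the cardinality is bounded above by $\#\{a\in A : f(a)\in B_{d_Y}(y,t)\}$. For such an $a$, uniform expansiveness of $g$ and closeness of $g\circ f$ to $\mathrm{id}_X$ give
\begin{equation*}
d_X(a,g(y))\leq d_X(a,g(f(a)))+d_X(g(f(a)),g(y))\leq r_0+\sigma_g(t),
\end{equation*}
so every such $a$ lies in the ball $B_{d_X}(g(y),r_0+\sigma_g(t))$. The coarse bounded geometry of $X$ then yields
\begin{equation*}
\#(f(A)\cap B_{d_Y}(y,t))\leq \lambda_X(r_0+\sigma_g(t)),
\end{equation*}
and defining $\lambda_Y(t)=\lambda_X(r_0+\sigma_g(t))$ completes the proof.

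The only mild subtlety I anticipate is the distinction between counting elements of $f(A)$ and counting elements of $A$ whose image lies in the ball: non-injectivity of $f$ only makes the image cardinality smaller, so the inequality goes in the favourable direction. Beyond that, the argument is a direct unpacking of the rough-equivalence characterization of coarse equivalence, and no genuine obstacle appears.
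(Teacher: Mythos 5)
Your proof is correct. The paper does not actually prove this proposition --- it is quoted from \cite[Proposition 3.10]{ALC0} without argument --- and your write-up supplies exactly the standard argument one would expect: push the controlled net $A$ forward to $f(A)$, use the net property of $f(X)$ together with uniform expansiveness of $f$ to see $f(A)$ is a net in $Y$, and pull a ball $B_{d_Y}(y,t)$ back via a coarse inverse $g$ (using that $g\circ f$ is close to the identity) into a controlled ball around $g(y)$ to get the counting function $\lambda_Y(t)=\lambda_X(r_0+\sigma_g(t))$. All the estimates check out, and your remark that non-injectivity of $f$ only helps the cardinality bound is the right observation.
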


A metric space $X$ is said to be {\it coarsely geodesic}\footnote{
The coarse structure of a coarsely geodesic space is called monogenic in \cite{Roe}.
See \cite[Proposition 2.57]{Roe}.}
 if $X$ is coarsely equivalent to a geodesic metric space.

\begin{lemma}\label{lem:3.8}
Let $X$ be a non-compact proper metric space.
If $X$ is coarsely geodesic and has coarse bounded geometry
 then there exists a rough map $\alpha : \N \to X$.
\end{lemma}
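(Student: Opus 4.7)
The plan is to reduce the problem to the vertex set of a connected locally finite graph and then extract an infinite geodesic ray via König's lemma.

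First, since $X$ is coarsely geodesic, $X$ is coarsely equivalent to a geodesic metric space $M$. Applying Lemma~\ref{lem:3.6} to $M$ produces a coarse equivalence $V(G)\to M$ for some connected graph $G$, and composing with a coarse inverse of the coarse equivalence $M\to X$ yields a coarse equivalence $f\colon V(G)\to X$. Since coarse bounded geometry passes through coarse equivalences (Proposition~\ref{prop:3.7}), $V(G)$ has coarse bounded geometry, so $G$ is of finite type, i.e.\ locally finite. Excluding the trivial bounded case (no rough map $\N\to X$ can exist unless $X$ is unbounded), $V(G)$ is unbounded and, being locally finite, infinite.

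Next, I would build an infinite geodesic ray $v_0,v_1,v_2,\ldots$ in $G$ satisfying $d_G(v_i,v_j)=|i-j|$. Fixing a base vertex $v_0$, closed balls about $v_0$ in the locally finite graph $G$ are finite while $G$ itself is infinite and connected, so vertices of every graph-distance from $v_0$ exist. The tree whose nodes are the finite geodesic segments starting at $v_0$, with edges given by one-vertex prolongation, is therefore locally finite and infinite, and König's lemma supplies an infinite branch, which is the desired ray.

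Finally, setting $\alpha(n):=f(v_n)$ should finish the argument: if $f$ is a $\sigma$-rough equivalence, then
\[
d_X(\alpha(m),\alpha(n))\le\sigma(|m-n|)\quad\text{and}\quad|m-n|\le\sigma(d_X(\alpha(m),\alpha(n)))
\]
both follow directly from $d_G(v_m,v_n)=|m-n|$, so $\alpha$ is simultaneously uniformly expansive and uniformly metrically proper. The main obstacle I anticipate is the geodesic-ray construction; this is precisely where the two hypotheses cooperate, with coarse bounded geometry (via Proposition~\ref{prop:3.7}) supplying the local finiteness that makes König's lemma applicable, and coarse geodesicity (via Lemma~\ref{lem:3.6}) permitting one to model $X$ by the vertex set of a graph in the first place.
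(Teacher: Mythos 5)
Your proof is correct and follows the same skeleton as the paper's: reduce to the vertex set of a connected graph via Lemma~\ref{lem:3.6}, use Proposition~\ref{prop:3.7} to see that the graph is of finite type, extract a ray, and compose with the coarse equivalence back to $X$. The one point where you genuinely diverge is the ray-extraction step. The paper cites Diestel's theorem that an infinite connected graph with no vertex of infinite degree contains a ray, and then asserts that the vertex set of that ray is isometric to $\N$; you instead run K\"onig's lemma on the locally finite tree of geodesic segments issuing from a base vertex, producing a \emph{geodesic} ray, i.e.\ one with $d_G(v_i,v_j)=|i-j|$ in the metric of $G$ itself. Your version is the more careful one: an arbitrary ray in Diestel's sense is just an injective one-way infinite path, and in the metric induced from $G$ it may repeatedly return near itself, so the map $n\mapsto v_n$ would be $1$-Lipschitz but not uniformly metrically proper. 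The geodesic condition (which follows from $d_G(v_0,v_n)=n$ together with the triangle inequality) is exactly what makes $\alpha(n)=f(v_n)$ a rough map rather than merely a uniformly expansive one, so your K\"onig's-lemma step supplies what the paper's appeal to Diestel leaves implicit. Your observation that the statement tacitly requires $X$ to be unbounded is also apt.
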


\begin{proof}
Since $X$ is coarsely geodesic,
 there exists a coarse equivalence
 $f: M\to X$ from a geodesic metric space $M$.
By Lemma \ref{lem:3.6},
 there exist a connected graph $G$
 and a coarse equivalence $g:V(G)\to M$ from the vertices $V(G)$ of $G$.
Note that $G$ must be an infinite graph since $X$ is a non-compact proper metric space.
By Proposition \ref{prop:3.7}, $G$ must be of finite type since $M$ has coarse bounded geometry.
It is known \cite[8.2.1]{Die} that
 every infinite connected graph has a vertex of infinite degree or contains a ray.
Since $G$ is of finite type, there exists a ray $R\subset G$.
Note that the set of vertices $V(R)$ of $R$ is isometric to $\N$.
So let $h: \N \to V(R)\subset V(G)$ be the isometry.
Then the composition $\alpha =f\circ g\circ h : \N \to X$
 is a required rough map
 since $f$ and $g$ are rough equivalences by Corollary \ref{cor:2.7}.
\end{proof}

The following proposition is useful to verify the decomposability of a continuum.

\begin{proposition}\label{prop:3.9}
\cite[Theorem 3.41]{HY}
A continuum $K$ is decomposable if and only if
 there exists a proper subcontinuum $C\subset K$ with non-empty interior in $K$.
\end{proposition}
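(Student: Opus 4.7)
The plan is to handle the two implications separately.

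For the easy direction $(\Rightarrow)$, if $K = K_1 \cup K_2$ with both $K_i$ proper subcontinua, then $K \setminus K_1$ is a non-empty open subset of $K$ contained in $K_2$, which shows that $K_2$ is a proper subcontinuum with non-empty interior.

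For the converse $(\Leftarrow)$, assume $C$ is a proper subcontinuum of $K$ with non-empty interior $U$. The natural candidate for a complementary piece is $\overline{K \setminus C}$: it is closed, non-empty (since $C$ is proper), and a proper subset of $K$ (because $U$ is open and disjoint from $\overline{K \setminus C}$). Hence $K = C \cup \overline{K \setminus C}$ is already a decomposition of $K$ into two closed proper subsets. The main obstacle is that $\overline{K \setminus C}$ need not be connected.

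If $\overline{K \setminus C}$ happens to be connected, this immediately exhibits $K$ as decomposable. Otherwise, I would write $\overline{K \setminus C} = A \sqcup B$ with $A$, $B$ non-empty disjoint closed sets and carry out two steps: (a) show that $C$ meets both $A$ and $B$, using connectedness of $K$ (if, say, $C \cap A = \emptyset$, then $K = (C \cup B) \sqcup A$ would separate $K$); and (b) verify that $K_1 = C \cup A$ and $K_2 = C \cup B$ are proper subcontinua whose union is $K$. The subcontinuum property of $K_i$ follows from (a) as a union of two continua with common point in $C$, and properness reduces to a short computation: if $C \cup A = K$, then $B \subseteq C \cap \overline{K \setminus C} \subseteq \partial C$, which together with $B \subseteq \overline{K \setminus C} = A \sqcup B$ would force $B = \emptyset$.

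The heart of the argument is step (a): once the two pieces of $\overline{K \setminus C}$ are shown to attach to $C$, the rest is bookkeeping. The conceptual point is that the global connectedness of $K$ forces $C$ to act as a bridge gluing together the components of $\overline{K \setminus C}$, so no matter how disconnected $\overline{K \setminus C}$ is, one can always shift some of its pieces across $C$ to build the needed pair of proper subcontinua.
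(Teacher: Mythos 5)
The paper gives no proof of this proposition at all --- it is quoted verbatim from Hocking--Young --- so the only thing to assess is your argument on its own terms. Your forward direction is correct, and the skeleton of the converse (split $\overline{K\setminus C}$ when it is disconnected and attach the pieces to $C$) is the standard route. But two steps do not go through as written. First, in step (b) you conclude that $K_1=C\cup A$ is a continuum ``as a union of two continua with common point in $C$'': this presupposes that $A$ and $B$ are themselves connected, which is false in general --- a separation of $\overline{K\setminus C}$ into two non-empty disjoint closed pieces says nothing about the connectedness of either piece (e.g.\ if $K\setminus C$ has three components, one of $A$, $B$ must absorb two of them). Step (a) only gives $C\cap A\neq\emptyset$, which does not rule out a component of $A$ disjoint from $C$. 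The conclusion is nevertheless true, but it needs another separation argument of exactly the kind you used in (a): if $C\cup A=P\sqcup Q$ were a separation with $C\subseteq P$, then $Q\subseteq A$ would be a non-empty closed subset of $K$ disjoint from both $C$ and $B$, so $K=(P\cup B)\sqcup Q$ would disconnect $K$. This ``boundary bumping''-type lemma is the real content of this direction, and it is the idea missing from your write-up.

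Second, the properness computation is a non sequitur as stated: from $B\subseteq C\cap\overline{K\setminus C}\subseteq\partial C$ and $B\subseteq A\sqcup B$ nothing forces $B=\emptyset$. What does work: $B$ is relatively open in $\overline{K\setminus C}$ (being the complement there of the closed set $A$), and $K\setminus C$ is dense in $\overline{K\setminus C}$, so a non-empty $B$ would have to meet $K\setminus C$, contradicting $B\subseteq C$. Both defects disappear if you separate the open set $K\setminus C$ instead of its closure: writing $K\setminus C=U_1\sqcup U_2$ with $U_1,U_2$ non-empty and open in $K$, one has $\overline{U_1}\cap U_2=\emptyset$, so $C\cup\overline{U_1}\subseteq K\setminus U_2$ is proper for free, and connectedness of $C\cup\overline{U_i}$ follows from the separation argument above. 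With these repairs your proof is complete.
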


Now we shall give a characterization of a space
 which is coarsely equivalent to $\N$
  using the indecomposability of its Higson corona.

\begin{theorem}\label{thm:3.10}
Let $X$ be a non-compact proper metric space.
Then $X$ is coarsely equivalent to $\N$ if and only if
 the following two conditions are satisfied:
\begin{enumerate}
\renewcommand{\labelenumi}{(\roman{enumi})}
\item $X$ is coarsely geodesic and has coarse bounded geometry, and
\item the Higson corona $\nu X$ is an indecomposable continuum.
\end{enumerate}
\end{theorem}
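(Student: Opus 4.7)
For the forward direction, coarse equivalence preserves coarse bounded geometry (Proposition~\ref{prop:3.7}) and coarse-geodesicity, so if $X$ is coarsely equivalent to $\N$ then~(i) holds; and the induced homeomorphism $\nu X\approx\nu\N$ combined with Corollary~\ref{cor:3.2} yields~(ii).

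For the backward direction, assume~(i) and~(ii). Apply Lemma~\ref{lem:3.8} to obtain a rough map $\alpha\colon\N\to X$, and set $N:=\alpha(\N)$. Since $\alpha$ is uniformly metrically proper it restricts to a coarse equivalence $\N\to N$, so by Proposition~\ref{prop:2.3} and Corollary~\ref{cor:3.2} the subspace $N^{\ast}$ of $\nu X$ is homeomorphic to $\nu\N$, an indecomposable continuum. It then suffices to show that $N$ is a net in $X$, for then $\alpha$ itself is a coarse equivalence.

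Suppose for contradiction that $N$ is not a net. The plan is to derive a contradiction with the indecomposability of $\nu X$ by exhibiting a proper subcontinuum of $\nu X$ with non-empty interior (Proposition~\ref{prop:3.9}). Fix a coarse equivalence $f\colon M\to X$ from a geodesic space (available by~(i)), and choose a sequence $\{y_n\}\subset f(M)$ with $d_X(y_n,N)$ growing very rapidly and with the $y_n$'s widely spaced. Apply Lemma~\ref{lem:3.4} to extract a subsequence $\{x_n\}$ and $\mu$-connected sets $B_n\supseteq B_{d_X}(x_n,n)$ satisfying $\diam B_n\leq\tau(n)$ and $d_X(x_0,B_n)\to\infty$. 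Using geodesics in $M$, join each $x_n$ to a suitably chosen point $p_n\in N$ by a $\mu$-connected coarse geodesic $C_n\subset X$ (the $f$-image of a geodesic in $M$); the set $\widetilde B_n:=B_n\cup C_n$ is then $\mu'$-connected, meets $N$ at $p_n$, and, with a careful choice of $p_n$, still satisfies $d_X(x_0,\widetilde B_n)\to\infty$. Partition $\N=I\sqcup J$ into two infinite subsets and form $E_I:=N\cup\bigcup_{n\in I}\widetilde B_n$ and $E_J:=N\cup\bigcup_{n\in J}\widetilde B_n$; Lemma~\ref{lem:3.5} (with $N$ as the base, whose Higson corona is connected) then ensures that $E_I^{\ast}$ and $E_J^{\ast}$ are continua in $\nu X$.

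Each of $E_I^{\ast},E_J^{\ast}$ has non-empty interior: since $B_{d_X}(x_n,n)\subset E_I$ for $n\in I$, the sets $\{x_n:n\in I\}$ and $X\setminus E_I$ diverge coarsely in $X$, so by Proposition~\ref{prop:2.2} their traces in $\nu X$ are disjoint; combined with the inclusion $\operatorname{int}(E_I^{\ast})\supseteq\nu X\setminus(X\setminus E_I)^{\ast}$, this forces the non-empty set $\{x_n:n\in I\}^{\ast}$ into $\operatorname{int}(E_I^{\ast})$, and analogously for $E_J^{\ast}$. If $(E_I\cup E_J)^{\ast}\neq\nu X$ then this continuum is itself the desired proper subcontinuum with non-empty interior; otherwise $\nu X=E_I^{\ast}\cup E_J^{\ast}$ is a decomposition into two subcontinua, and the \emph{main obstacle} is to check that both pieces are proper. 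For this, I would leverage the spacing so that for $m\in J$ the estimate $d_X(x_m,\widetilde B_n)\geq d_X(x_m,x_n)-\diam\widetilde B_n$ and the controlled growth of $\diam\widetilde B_n$ (by $\tau(n)+d_X(x_n,N)$) against much larger inter-distances $d_X(x_m,x_n)$ give $d_X(x_m,E_I)\to\infty$ as $m\to\infty$ through $J$; Proposition~\ref{prop:2.2} then yields $\{x_m:m\in J\}^{\ast}\cap E_I^{\ast}=\emptyset$, so $E_I^{\ast}\neq\nu X$, and symmetrically $E_J^{\ast}\neq\nu X$. In either case Proposition~\ref{prop:3.9} supplies a decomposition of $\nu X$, contradicting~(ii).
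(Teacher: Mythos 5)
The forward direction and the overall strategy (reduce to showing that if $N=\alpha(\N)$ is not a net then $\nu X$ has a proper subcontinuum with non-empty interior, then invoke Proposition~\ref{prop:3.9}) match the paper. The divergence, and the gap, is in how you build that subcontinuum. You center the large balls $B_n\supseteq B_{d_X}(x_n,n)$ at points $x_n$ chosen \emph{far from} $N$ and then tether each $B_n$ back to $N$ by an arc $C_n=f(\text{geodesic})$. Two things go wrong with these tethers. First, Lemma~\ref{lem:3.5} requires $d_X(x_0,\widetilde B_n)\to\infty$, but a geodesic in $M$ joining $v_n$ to a preimage of $p_n$ can pass arbitrarily close to the basepoint no matter which $p_n\in N$ you pick (think of a space with a bottleneck at $x_0$, where every coarse path from the region containing $x_n$ to $N$ must cross a fixed ball); then $E_I^{\ast}$ need not be connected at all, and your ``careful choice of $p_n$'' is doing work that is neither carried out nor obviously possible in general. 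Second, your properness argument needs $d_X(x_m,\widetilde B_n)\to\infty$ for $m\in J$, $n\in I$, hence $d_X(x_m,x_n)\gg\diam\widetilde B_n$. But $\diam C_n$ is at least $d_X(x_n,N)$ and can be as large as $\sigma^2(d_X(x_n,N)+r)$, while $d_X(x_n,N)$ is itself comparable to $d_X(x_n,x_0)$; so for $n>m$ the required inequality $d_X(x_m,x_n)\geq\diam\widetilde B_n+\cdots$ cannot be arranged by any inductive spacing of the $x_n$ (indeed $C_n$ may simply run through $x_m$ on its way to $N$). Both failures are fatal to the claim that $E_I^{\ast}$, $E_J^{\ast}$ are proper subcontinua.

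The paper avoids tethers entirely by a different placement of the two ingredients: it applies Lemma~\ref{lem:3.4} to a sequence $\{y_n\}$ chosen \emph{inside} $N$, so each $B_n$ already meets $N$ at $x_n$ and condition (D) of that lemma gives $d_X(x_0,B_n)\to\infty$ for free; the points far from $N$ (witnessing that $N$ is not a net) are used only as a separate sequence $Z=\{z_n\}$, spaced so that $d_X(z_k,x_n)>3\tau(n)$ beats $\diam B_n\leq\tau(n)$, whence $Z$ and $E=N\cup\bigcup B_n$ diverge coarsely and $\emptyset\neq Z^{\ast}\subset\nu X\setminus E^{\ast}$ certifies properness of the single continuum $E^{\ast}$. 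Note also that you do not need two sets $E_I$, $E_J$ covering $\nu X$: Proposition~\ref{prop:3.9} asks only for one proper subcontinuum with non-empty interior. If you want to salvage your argument, the fix is essentially to adopt the paper's placement: put the large balls on $N$ and keep the far-away points as a disjoint witness set, rather than trying to connect far-away balls back to $N$.
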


\begin{proof}
Suppose that $X$ is coarsely equivalent to $\N$.
Since $\N$ is coarsely equivalent to $\R_{+}$,
 $X$ satisfies (i) and (ii) by
 Theorem \ref{thm:3.1} and Proposition \ref{prop:3.7}.
\par

Now, let $X$ be a non-compact proper metric space satisfying (i) and (ii).
Assume that $X$ is not coarsely equivalent to $\N$.
Then we shall show that $\nu X$ is a decomposable continuum.
Let $\alpha :\N \to X$ be a rough map assured by Lemma \ref{lem:3.8}.
Put $$\Gamma=\alpha(\N).$$
Note that $\Gamma$ is an unbounded closed subset of $X$ since $\alpha$ is a rough map.
By Proposition \ref{prop:2.3},
 $\Gamma^{\ast}$ is homeomorphic to $\nu \Gamma$ which is homeomorphic to $\nu \N$.
Hence, $\Gamma^{\ast}$ is a continuum by Theorem \ref{thm:3.1}.
\par
Since $X$ is coarsely geodesic,
 there exists a coarse equivalence $f: M\to X$ from a geodesic metric space $M$.
Then we take a positive number $\mu>0$
 and a non-decreasing function $\tau :\Ray\to \Ray$
 with $\displaystyle\lim_{t\to \infty}\tau(t)=\infty$ as in Lemma \ref{lem:3.4} for $f: M\to X$.
Since $X$ is not coarsely equivalent to $\N$,
 the rough map $\alpha: \N\to X$ cannot be a coarse equivalence.
Hence, $\Gamma$ cannot be a net in $X$ by Proposition \ref{prop:2.6}.
So we can take a sequence $\{ z_i \}_{i=1}^{\infty}\subset X$
 such that
\begin{enumerate}
\item\label{c-1}
 $d_X(z_n, \Gamma)>3\tau(n)$ for each $n\in \N$.
\end{enumerate}
Put $Z=\{ z_i \}_{i=1}^{\infty}$.
Note that $Z^{\ast}\neq\emptyset$ since $Z$ is an unbounded set.
Since $\Gamma$ is unbounded,
 we can take an unbounded sequence $\{ y_i\}_{i=1}^{\infty}\subset \Gamma$,
 such that
\begin{enumerate}
 \setcounter{enumi}{1}
\item\label{c-2}
 $d_X(z_k, y_{n})> 3 \tau(n)$ for every $1\leq k\leq n$.
\end{enumerate}
By Lemma \ref{lem:3.4},
 there exist a subsequence $\{x_n \}_{n=1}^{\infty}\subset \{y_n \}_{n=1}^{\infty}\subset \Gamma$
 and a family $\{B_n \}_{n=1}^{\infty}$ of subsets of $X$ satisfying the following:
\begin{enumerate}
\setcounter{enumi}{0}
\item[$(\mbox{A})$] $\diam B_n <\tau(n)$,
\item[$(\mbox{B})$] $B_n$ is $\mu$-connected,
\item[$(\mbox{C})$] $B_n$ contains the $n$-ball $B(x_n, n)$
 for every $n\in\N$, and
\item[$(\mbox{D})$] $\displaystyle\lim_{n\to \infty} d_X(x_0, B_n)=\infty$.
\end{enumerate}
By (\ref{c-2}), we may assume that
\begin{enumerate}
\setcounter{enumi}{2}
\item\label{c-3}
 $d_X(z_k, x_{n})> 3 \tau(n)$ for every $1\leq k\leq n$.
\end{enumerate}
Put
 $$E=\Gamma \cup \left( \cup_{n=1}^{\infty}B_n\right).$$
Then $E^{\ast}$ is a continuum by Lemma \ref{lem:3.5}.
\par
Now we shall show that $Z$ and $E$ diverge coarsely in $X$.
Given $r>0$,
 we can take $n_0 >0$ such that $\tau(n_0)>r$ since $\displaystyle\lim_{t\to \infty}\tau(t)=\infty$.
Fix $m\geq n_0$.
Then $d_X(z_m, \Gamma)>3\tau(m)\geq 3\tau(n_0)>3r$ by $(\ref{c-1})$.
So we have $B(z_m, r)\cap N(\Gamma,r)=\emptyset$.
Recall that $x_n \in B_n \cap \Gamma$ for each $n\in \N$.
So, if $n< m$ then
\begin{align*}
 d_X(z_m, B_n)
 &\geq d_X(z_m, \Gamma)-\diam B_n\\
 &>3\tau(m) -\tau(n) \geq 2\tau(m)>2r
\end{align*}
  by $(\ref{c-1})$ and $(\mbox{A})$.
If $m\leq n$
 then
\begin{align*}
 d_X(z_m, B_n)
 &\geq d_X(z_m, x_n)-\diam B_n\\
 &>3\tau(n) -\tau(n) =2\tau(n)>2r
\end{align*}
 by $(\ref{c-3})$ and $(\mbox{A})$.
As a consequence, $B(z_m, r)\cap N(B_n ,r)=\emptyset$ for every $n$.
Thus the intersection $N(Z, r)\cap N(E, r)$
 is contained in the bounded set $\cup_{i=1}^{n_0 -1} B(z_i, r)$,
 that is, the collection $Z$ and $E$ diverge coarsely in $X$.
Hence, $Z^{\ast}\cap E^{\ast}=\emptyset$ by Proposition \ref{prop:2.2}.
Since $Z^{\ast}\neq \emptyset$,
 this establishes that $E^{\ast}$ is a proper subcontinuum of $\nu X$.
\par

By $(\mbox{C})$, $E$ contains $B(x_n, n)$ for every $n\in \N$,
 that is,
 $E$ contains disjoint metric balls of arbitrary large radius.
Hence, $E^{\ast}$ has non-empty interior\footnote{
 In fact, one can construct a Higson function
 $h: X\to [0,1]$ such that $\mbox{supp}\, h\subset \cup_{n=1}^{\infty} B(x_n, n)\subset E$
 and $h(x_n )=1$ for infinitely many $n$.
By Proposition \ref{prop:2.1}, there is the extension $\hat{h}: X^{\nu}\to [0,1]$ of $h$.
Then
 $\hat{h}^{-1}((0,1])$ is open in $X^{\nu}$
  and $\hat{h}^{-1}((0,1])\cap E^{\ast}\neq \emptyset$ by $(\mbox{D})$.
In particular, $\hat{h}^{-1}((0,1])\cap \nu X \subset E^{\ast}$.
 }
 in $\nu X$ \cite[Proposition 4.12]{ALC1}.
We have shown that $E^{\ast}$ is a proper subcontinuum of $\nu X$ with non-empty interior.
By Proposition \ref{prop:3.9}, $\nu X$ must be a decomposable continuum, a contradiction.
\end{proof}

\begin{corollary}\label{cor:3.11}
Suppose that $X$ is a non-compact proper metric space
 which is coarsely geodesic and has coarse bounded geometry.
Then the Higson corona $\nu X$ of $X$ is an indecomposable continuum if and only if $X$ is coarsely equivalent to $\N$.
\end{corollary}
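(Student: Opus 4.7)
The plan is to observe that Corollary \ref{cor:3.11} is essentially a direct consequence of Theorem \ref{thm:3.10} combined with Corollary \ref{cor:3.2}, and I would present it as such rather than reprove anything.

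For the forward direction, suppose $\nu X$ is an indecomposable continuum. Then in particular $\nu X$ is a continuum, so the hypotheses of Theorem \ref{thm:3.10} are satisfied: $X$ is coarsely geodesic, has coarse bounded geometry, and $\nu X$ is an indecomposable continuum. Theorem \ref{thm:3.10} then yields that $X$ is coarsely equivalent to $\N$. Since coarsely equivalent proper metric spaces have homeomorphic Higson coronae (as noted in Section \ref{prelim} via the induced map $\nu f$), we conclude that $\nu X$ is homeomorphic to $\nu \N$.

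For the converse, suppose $\nu X$ is homeomorphic to $\nu \N$. By Corollary \ref{cor:3.2}, $\nu \N$ is a non-metrizable indecomposable continuum, so $\nu X$ is an indecomposable continuum as well.

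There is no real obstacle here; the content has all been extracted into Theorem \ref{thm:3.10} and Corollary \ref{cor:3.2}, and Corollary \ref{cor:3.11} is essentially a restatement tailored to emphasize the topological uniqueness of the Higson corona in this setting. The only thing worth double-checking when writing out the formal proof is that ``indecomposable continuum'' in the statement is being used with its standard convention requiring connectedness, so that Theorem \ref{thm:3.10}(ii) is immediately applicable without any additional verification.
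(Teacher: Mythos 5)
Your proof is correct and matches the paper's intent: the corollary is stated without proof precisely because it follows immediately from Theorem \ref{thm:3.10} (forward direction, via the fact that coarse equivalences induce homeomorphisms of Higson coronae) and Corollary \ref{cor:3.2} (converse). No gaps.
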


Next we shall give a characterization of a space
 which is coarsely equivalent to $\Z$
  using the indecomposability of the components of its Higson corona.
\par

\begin{theorem}\label{thm:3.12}
Let $X$ be a non-compact proper metric space.
Then $X$ is coarsely equivalent to $\Z$ if and only if
the following two conditions are satisfied:
\begin{enumerate}
\item[(i)] $X$ is coarsely geodesic and has coarse bounded geometry, and
\item[(ii)] the Higson corona $\nu X$ is a topological sum of two indecomposable continua,
 each of which is homeomorphic to $\nu \N$.
\end{enumerate}
\end{theorem}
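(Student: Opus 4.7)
The forward direction is routine. Since $\Z$ is coarsely equivalent to the geodesic space $\R$ and has coarse bounded geometry, Proposition~\ref{prop:3.7} yields (i). The sets $\Z_{\geq 0}$ and $\Z_{\leq 0}$ are closed unbounded subsets of $\Z$ that diverge coarsely (their $r$-neighborhoods intersect only in $[-r,r]$), so Propositions~\ref{prop:2.2} and~\ref{prop:2.3} give $\nu\Z\approx \nu\Z_{\geq 0}\oplus\nu\Z_{\leq 0}\approx \nu\N\oplus\nu\N$, with each summand an indecomposable continuum by Corollary~\ref{cor:3.2}. This transfers to $\nu X$ under the coarse equivalence.

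For the reverse direction, assume (i) and (ii). Because $\nu X$ is disconnected, Theorem~\ref{thm:2.4} shows $X$ fails property $(\mathfrak{C})$, and Proposition~\ref{prop:2.5} then produces closed unbounded $A,B\subset X$ with $X=A\cup B$, coarsely divergent, and $\nu X\approx\nu A\oplus\nu B$. The two summands in (ii) are indecomposable continua, hence connected components of $\nu X$; since $\nu A$ and $\nu B$ are nonempty, disjoint, and clopen in $\nu X$ with union $\nu X$, each must coincide with one of them. Thus $\nu A\approx\nu B\approx\nu\N$, both indecomposable continua.

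The strategy is to apply Corollary~\ref{cor:3.11} to $A$ and to $B$, obtaining $A\sim\N$ and $B\sim\N$. Coarse bounded geometry is inherited from $X$: for a bounded-geometry net $N_X$ of $X$, choosing (where possible) a point of $A$ within the net-radius of each element of $N_X$ produces such a net of $A$. The main obstacle is showing $A$ (and symmetrically $B$) is coarsely geodesic. To this end I would use Lemma~\ref{lem:3.6} and Proposition~\ref{prop:3.7} to replace $X$ by the vertex set $V(G)$ of a connected locally finite graph $G$, pulling the decomposition back to $V(G)=A'\cup B'$ with the same properties. From coarse divergence, for sufficiently large $R$ both $A'\cap B'$ and $N(A',1)\cap N(B',1)$ lie in $B_G(x_0,R)$, so for $F:=B_G(x_0,R)$ the parts $A'\setminus F$ and $B'\setminus F$ are disjoint and no edge of $G-F$ crosses between them; hence each component of $G-F$ lies entirely on one side, and local finiteness of $G$ ensures only finitely many components. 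The infinite $A'$-components of $G-F$ have pairwise coarsely divergent, disjoint, and closed Higson coronae whose union equals the connected space $\nu A'\approx\nu\N$, so there is exactly one such component $C$. As $A'\setminus C$ lies in $F$ together with the bounded union of finite components, we have $A'\sim C$; the metric on $C$ induced from $G$ and the intrinsic path metric of $C$ agree up to a bounded perturbation near $F$ (since any $G$-shortcut between vertices of $C$ must pass through the finite set $F$), so $C$ is coarsely geodesic, and hence so are $A'$ and $A$.

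With $A\sim\N$ and $B\sim\N$, fix coarse equivalences $\phi_A\colon A\to \Z_{\geq 0}$ and $\phi_B\colon B\to \Z_{\leq 0}$ normalized to send the base point to $0$. Define $\phi\colon X\to\Z$ by $\phi|_A=\phi_A$ and $\phi|_{B\setminus A}=\phi_B|_{B\setminus A}$. Coarse divergence of $A,B$ in $X$, matched by the divergence of $\Z_{\geq 0},\Z_{\leq 0}$ in $\Z$, ensures uniform expansiveness and metric properness across cross terms: any pair $x\in A$, $y\in B\setminus A$ with $d_X(x,y)\leq r$ lies in a bounded $B_X(x_0,R(r))$, so $|\phi(x)-\phi(y)|$ is controlled by the uniform expansiveness of $\phi_A$ and $\phi_B$ on that ball. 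The image $\phi(X)$ is a net in $\Z$ because each half is (modulo a bounded piece near $0$), yielding the coarse equivalence $X\sim\Z$.
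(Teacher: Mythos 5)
Your argument is correct, and its overall skeleton matches the paper's: the forward direction is the same, and the reverse direction likewise begins with Theorem~\ref{thm:2.4} and Proposition~\ref{prop:2.5} to split $X=A\cup B$ with $\nu X\approx\nu A\oplus\nu B$, reduces to showing each piece is coarsely geodesic with coarse bounded geometry so that Theorem~\ref{thm:3.10} applies (note it is Theorem~\ref{thm:3.10}, not Corollary~\ref{cor:3.11}, that yields the coarse equivalences $A\sim\N$, $B\sim\N$), and then glues. Where you genuinely diverge is in the one nontrivial technical step, proving the pieces are coarsely geodesic. The paper keeps the graph $G$ connected throughout: it chooses a finite connected subgraph $G_0$ containing the preimage of the ball where the two sides can interact, forms $G_i=H_i\cup G_0$ so that $G_1\cap G_2=G_0$ is connected and hence each $G_i$ is automatically connected, and replaces $A_i$ by a neighborhood of $f(V(G_i))$. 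You instead delete the finite ball $F$, observe that each component of $G-F$ lies on one side and that there are finitely many of them, and use the connectedness of $\nu A\approx\nu\N$ to force exactly one unbounded component $C$ per side; this uses hypothesis (ii) more directly and avoids the subgraph surgery, at the cost of having to compare the induced metric $d_G|_C$ with the intrinsic path metric $d_C$. That comparison is the only spot you leave terse: the correct statement is $d_G\leq d_C\leq d_G+K$, and to get the additive constant $K$ you need not just that a $G$-shortcut between points of $C$ must pass through $F$, but also that the set of vertices of $C$ adjacent to $F$ is finite (local finiteness) and hence has finite $d_C$-diameter, so that the portion of the shortcut outside $C$ can be replaced by a detour of bounded $d_C$-length. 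With that sentence added, your route is a complete and valid alternative to the paper's construction.
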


\begin{proof}
Suppose that $X$ is coarsely equivalent to $\Z$.
Then $X$ satisfies (i)
 since $\Z$ is coarsely equivalent to $\R$.
Since $\Z_{\geq0}$ and $\Z_{<0}$ diverge in $\Z$,
 $\nu \Z $ is a topological sum of $(\Z_{\geq0})^{\ast}$ and $(\Z_{<0})^{\ast}$.
Clearly, both of $\Z_{\geq0}$ and $\Z_{<0}$ are coarsely equivalent to $\N$.
So, both of  $(\Z_{\geq0})^{\ast}$ and $(\Z_{<0})^{\ast}$
 are homeomorphic to $\nu \N$ by Proposition \ref{prop:2.3}.
Thus, $\nu X \approx (\Z_{\geq0})^{\ast}\oplus (\Z_{<0})^{\ast}
 \approx \nu \N \oplus \nu \N$, (ii) is satisfied.
 \par
 
Now suppose that $X$ satisfies (i) and (ii).
Then $X$ cannot satisfy $(\mathfrak{C})$ by Theorem \ref{thm:2.10}.
By Proposition \ref{prop:2.11},
 there are unbounded closed subspaces $A_1,\ A_2\subset X$ such that
\begin{enumerate}
\item\label{C-1} $X=A_1\cup A_2$,
\item\label{C-2} $A_1$ and $A_2$ diverges coarsely in $X$, and
\item\label{C-3} $\nu X \approx \nu A_1 \oplus \nu A_2$.
\end{enumerate}
Then $\nu A_i$ must be an indecomposable continuum by (ii) for each $i=1, 2$.
\par
For $i=1,2$, we shall transform $A_i$ into
 a space $A_{i}'$ which is coarsely geodesic and has coarse bounded geometry.
By Lemma \ref{lem:3.6},
 using our assumption that $X$ is coarsely geodesic,
 there exists a coarse equivalence $f: V(G)\to X$
 from the vertices $V(G)$ of a connected graph $G$,
 where the metric on $V(G)$ is induced by the path metric on $G$
 such that each edge of which is isometric to the unit interval.
We may assume that $f$ is a $\sigma$-rough equivalence for some non-decreasing function $\sigma : \Ray \to \Ray$.
Put 
$$\Gamma =f(V(G))$$
 and say that $\Gamma$ is an $r$-net in $X$ for some $r>0$.
Let $\delta>\sigma (1)$.
Since $A_1$ and $A_2$ diverge coarsely,
 there exists $R>0$ such that
\begin{align}
\tag*{$(3.1)$}\label{3.1}
N(A_1, \delta)\cap N(A_2, \delta)\cap (X\setminus B(x_0, R))=\emptyset.
\end{align}
Note that $G$ is of finite type by Proposition \ref{prop:3.7}.
Thus $f^{-1}(B(x_0, R)\cap \Gamma)$ is a finite set since $f$ is metrically proper.
Since $G$ is a connected graph and $f^{-1}(B(x_0, R)\cap \Gamma)$ is a finite set,
 there exists a connected finite subgraph $F$ of $G$ such that $f^{-1}(B(x_0, R)\cap \Gamma)\subset F$.
Let $G_{0}$ be the {\it induced subgraph of $G$ spanned by the vertex set $V(F)$} of $F$,
 that is,
 the subgraph of $G$ whose vertex set is $V(F)$
 and whose edge set consists of all of the edges in $G$ that have both endpoints in $V(F)$.
We note that $G_0$ is a finite connected subgraph 
 spanned by $V(G_0)=V(F)$.
Put $$\Gamma_i =\Gamma\cap A_i$$ for each $i=1,2$.
Let $H_i$ be the subgraph of $G$ such that
 each edge of which contains a vertex in $f^{-1}(\Gamma_i)\setminus V(G_0)$, $i=1,2$.
Put $$G_i =H_i \cup G_0$$ for each $i=1,2$.
Then $G=G_1 \cup G_2$ since $G_0$ is spanned by $V(G_0)$.
Since each edge of $G$ has diameter $1$ and $\delta >\sigma (1)$,
 $\ref{3.1}$ implies that $H_1\cap H_2 =\emptyset$,
 i.e., $G_1\cap G_2 =G_0$.
Hence,
 both $G_1$ and $G_2$ are connected graphs
 since $G=G_1\cup G_2$ and $G_0=G_1\cap G_2$ are connected.
For each $i=1,2$, put
$$A_{i}'=\overline{N(f(V(G_i) ), r)},$$
 where $V(G_i)$ is the vertices of $G_i$.
Then both $A_{1}'$ and $A_{2}'$ are coarsely geodesic and have coarsely bounded geometry
 since $A_{i}'$ and $G_i $ are coarsely equivalent for each $i=1,2$.
Since $\Gamma$ is an $r$-net in $X$, we have $X=A_{1}'\cup A_{2}'$.
Now it is easy to see that $A_{i}'\setminus B(x_0, R') =A_i \setminus B(x_0, R')$ for sufficiently large $R'>0$.
So we have $A_{i}^{\ast}=(A_{i}')^{\ast}=\nu A_i$, $i=1,2$.
Thus each $\nu A_{i}'$ is an indecomposable continuum for $i=1, 2$.
\par
By Theorem \ref{thm:3.10},
 there exists a coarse equivalence $f_i : \N\to A_{i}'$ for each $i=1,2$.
Define $f: \Z\to X$ by $f(0)=x_0$, $f(k)=f_1(k)$ if $k>0$, and $f(k)=f_2 (|k|)$ if $k<0$.
Then it is easy to check that $f$ is a coarse equivalence between $\Z$ and $X$.
\end{proof}

\begin{corollary}\label{cor:3.13}
Suppose that $X$ is a non-compact proper metric space
 which is coarsely geodesic and has coarse bounded geometry.
Then $X$ is coarsely equivalent to $\Z$ if and only if
 the Higson corona $\nu X$ of $X$ is a topological sum of two indecomposable continua,
 each of which is homeomorphic to $\nu \N$.
\end{corollary}

\section{Higson coronae of finitely generated groups with one or two ends}\label{FGG}

Let $X$ be a metric space.
A {\it proper ray} in $X$ is a proper continuous map $r: \Ray \to X$.
Two proper rays $r_1$ and $r_2$ in $X$ {\it define the same end} of $X$
 if for every compact subset $K \subset X$ there exists $N\in \N$ such that
 $r_1([N, \infty))$ and $r_2([N\infty))$ are contained in the same path component of $X\setminus K$.
This is an equivalence relation on proper rays.
An {\it end} of $X$ is an equivalence class of proper rays in $X$.
\par
For a finitely generated group $G$,
 let $\mbox{Cay}(G,S)$ denote its Cayley graph with respect to a finite generating set $S$.
The number of ends of $G$ is defined as the number of ends of $\mbox{Cay}(G,S)$
 and it does not depend on the choice of finite generating sets.
It is known that a finitely generated group $G$ can have $0$, $1$, $2$, or infinitely many ends,
and the group structure is determined when it has two or infinitely many ends.
Also, the number of ends of $G$ is zero
 if and only if $G$ is a finite group \cite[Theorem 8.32]{BH}.
In that sense, finitely generated groups having exactly one end are fascinating.
Refer to \cite{BH}, \cite{DK}, or \cite{Geoghegan} for more information on ends of groups.
\par
In this section,
 we give characterizations of finitely generated groups that have one or two ends
 by decomposability/indecomposability of the components of their Higson coronae.

The the following characterization was given by Weighill \cite{Wei}.

\begin{theorem}\label{thm:4.1}
\cite[Corollary 7.3]{Wei}
A finitely generated group $G$ with the word length metric has a connected Higson corona
 if and only if $G$ has exactly one end.
\end{theorem}

Now we can refine this result as follows:

\begin{theorem}\label{thm:4.2}
Let $G$ be a finitely generated group with the word length metric.
Then $G$ has exactly one end
 if and only if its Higson corona $\nu G$ is a decomposable continuum.
\end{theorem}

\begin{lemma}\label{lem:4.3}
Let $G$ be a finitely generated infinite group with the word length metric.
Then $G$ is not coarsely equivalent to $\N$.
\end{lemma}

\begin{proof}
Let $G$ be a finitely generated infinite group
 and let $\mbox{Cay}(G)$ be a Cayley graph of $G$ with respect to some finite generating set.
Since $\mbox{Cay}(G)$ contains an isometric copy of $(\Z, d)$,
 it suffices to show
 that $(\Bbb Z, d)$ is not coarsely embeddable
 in $(\N, d)$.
\par
Suppose that $(\Bbb Z, d)$ is {\it coarsely embeddable} in $(\N, d)$,
 that is,
 there exists a coarse map
 $f: \Z \to \N$
 such that
\begin{align}
\tag*{(4.1)}\label{4.3-1}
\rho_{-}(d(x,y))\leq d(f(x), f(y))\leq \rho_{+} (d(x,y))
\end{align}
for every $x, y\in \Z$,
 where $\rho_{-}$ and $\rho_{+}$ are non-decreasing functions 
 with $\displaystyle\lim_{t\to\infty}\rho_{-}(t)=\infty$.
Since $f$ is metrically proper,
 $f(\Z_{\geq 0})$ is an unbounded subset of $\N$.
Also, it follows from \ref{4.3-1} that $f(\Z_{\geq 0})$ is an $\varepsilon$-net in $\N$,
 where $\varepsilon =\max\{ \rho_{+}(1),\ d(1, f(0))\}$.
Hence, for each $k\in \N$,
 there is an element $m_k \in \Z_{\geq 0}$ such that
 $d( f(-k), f(m_k))<\varepsilon$.
Since $\displaystyle\lim_{t\to\infty}\rho_{-}(t)=\infty$,
 there exists $n\in \N$ such that $\rho_{-}(n)>\varepsilon $.
Then we have
\[
\rho_{-}(d(-n, m_{n}))\geq \rho_{-}(n)>\varepsilon  >d( f(-n), f(m_n)),
\]
 a contradiction.
\end{proof}

Note that a finitely generated group with the word length metric is coarsely geodesic
 and has coarse bounded geometry.

\begin{proof}[Proof of Theorem \ref{thm:4.2}]
Suppose that $G$ has exactly one end.
By Theorem \ref{thm:4.1},
 the Higson corona $\nu G$ is a continuum.
Thus $\nu G$ is decomposable by Lemma \ref{lem:4.3} and Theorem \ref{thm:3.10}.
The reverse implication follows from Theorem \ref{thm:4.1}.
\end{proof}

\begin{lemma}\label{lem:4.4}
Let $G$ be a finitely generated group with the word length metric.
Then $G$ has exactly two ends if and only if $G$ is coarsely equivalent to $\Z$.
\end{lemma}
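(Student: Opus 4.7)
The plan is to prove the two implications separately, with the nontrivial content concentrated in the ``only if'' direction, which rests on a classical structure theorem.

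For the ``if'' direction, suppose there is a coarse equivalence $\varphi : \Z \to G$. The number of ends of a finitely generated group (equivalently, of its Cayley graph viewed as a proper geodesic metric space) is a coarse invariant: a coarse equivalence between proper geodesic metric spaces sends proper rays to maps that are close to proper rays, and it preserves the relation of defining the same end (two proper rays $r_1,r_2$ defining distinct ends of $\Z$ remain ``separated by a bounded set'' after applying $\varphi$, because the coarse divergence of their images is controlled by the lower gauge $\rho_{-}$ of $\varphi$). Since $\Z$ has exactly two ends, so does $G$.

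For the ``only if'' direction, suppose $G$ has exactly two ends. Here I invoke the classical structure theorem for two-ended groups, which is stated in the references already cited in the paper (see e.g.\ \cite{BH}, \cite{DK}, \cite{Geoghegan}): every finitely generated group with exactly two ends is virtually $\Z$, i.e., contains an infinite cyclic subgroup $H \cong \Z$ of finite index. Once this is in hand, the conclusion follows by a standard geometric-group-theory argument: a finite-index subgroup $H$ of a finitely generated group $G$, equipped with the metric induced from the word metric on $G$, is quasi-isometric (hence coarsely equivalent) to $G$, because (a) picking a finite transversal $T$ of $H$ in $G$ shows that $H$ is a $\max_{t\in T}|t|$-net in $G$, so the inclusion $\iota : H \hookrightarrow G$ has image a net, and (b) $\iota$ is $1$-Lipschitz, while the induced metric on $H$ is bi-Lipschitz equivalent to any word metric on $H$ coming from a finite generating set. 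Composing an isometry $\Z \to H$ with $\iota$ yields the desired coarse equivalence $\Z \to G$.

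The main obstacle is the structural fact that a two-ended finitely generated group is virtually $\Z$, which is a deep theorem of Hopf--Freudenthal (also a special case of Stallings' theorem on ends of groups). For the purposes of this lemma I would treat it as a black box cited to the references above; everything else — coarse invariance of the number of ends and the passage from ``virtually $\Z$'' to a coarse equivalence with $\Z$ — is routine.
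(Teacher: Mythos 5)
Your proposal is correct, and the ``only if'' direction coincides exactly with the paper's argument: two ends implies virtually infinite cyclic (the Hopf--Stallings structure theorem, cited to \cite{BH}), and a finite-index subgroup is coarsely equivalent to the ambient group, so $G$ is coarsely equivalent to $\Z$. Where you diverge is the ``if'' direction. You invoke, as a black box, the coarse (quasi-isometry) invariance of the number of ends of a proper geodesic space, which is indeed standard (e.g.\ \cite[Prop.~I.8.29]{BH}) and immediately gives that $G$ has two ends; your sketch of why it holds (images of proper rays are close to proper rays, separation of ends is preserved via the lower control function) is the right idea, though a complete proof requires connecting up the discontinuous image of a ray inside the geodesic target. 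The paper instead stays inside its own machinery: it first uses Theorem \ref{thm:3.12} to see that $\nu G$ is disconnected, hence $G$ has at least two ends by Weighill's Proposition \ref{prop:4.2}, and then rules out three or more ends by a direct bare-hands argument showing that a coarse equivalence $f:\Z\to\mbox{Cay}(G)$ would have to cross a fixed bounded ball infinitely often between neighborhoods of two of the three rays, contradicting metric properness. Your route is shorter and more classical but imports an external theorem; the paper's route is longer but self-contained relative to the results it has already established. Both are valid proofs of the lemma.
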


\begin{proof}
Suppose that $G$ has exactly two ends.
Then $G$ contains $\Z$ as a subgroup of finite index \cite[Chap. I, 8.32]{BH}.
Thus $G$ is coarsely equivalent to $\Z$ \cite[Corollary 1.19]{Roe}.
\par
Now let $G$ be a finitely generated group which is coarsely equivalent to $\Z$.
Then the Higson corona $\nu G$ of $G$ is disconnected by Theorem \ref{thm:3.12}.
Thus $G$ has at least two ends by Theorem \ref{thm:4.1}.
Suppose that there are three proper rays
 $r_1, r_2, r_3: \Ray \to \mbox{Cay}(G)$
 that correspond to distinct three ends.
Put $Y=r_1 (\N) \cup r_2 (\N) \cup r_3 (\N)$.
Since $G$ is coarsely equivalent to $\Z$,
 $Y$ is coarsely embeddable in $\Z$,
 that is,
 there exists a coarse map
 $f: Y \to \Z$
 such that
\begin{align}
\tag*{(4.2)}\label{4.4-1}
\rho_{-}(d_G (x,y))\leq d(f(x), f(y))\leq \rho_{+} (d_G (x,y))
\end{align}
for every $x, y\in  Y$,
 where $\rho_{-}$ and $\rho_{+}$ are non-decreasing functions 
 with $\displaystyle\lim_{t\to\infty}\rho_{-}(t)=\infty$.
For each $i=1,2,3$,
 $f(r_i (\N))$ is unbounded in $\Z$ because of properness of $f$.
Hence, we may assume that
 both $\Z_{<0}$ and $\Z_{>0}$ have infinitely many elements of $f(r_1 (\N)\cup r_2 (\N)) $.
Then it follows from \ref{4.4-1} that $f(r_1 (\N)\cup r_2 (\N)) $ is an $\varepsilon$-net in $\Z$,
 where $\varepsilon =\max \{ \rho_{+} (1),\ d(0, f(r_1 (1))),\ d(0, f(r_2 (1)))\}$.
For each $n\in \N$,
 we can take $m_n \in \N$ such that
\[
\min \{ d(f(r_3 (n)), f(r_i (m_n )))\mid i=1, 2\}<\varepsilon.
\]
Since $\displaystyle\lim_{t\to\infty}\rho_{-}(t)=\infty$,
 there is $t_0 \in \R_{+}$ such that $\rho_{-}(t_0 )>\varepsilon$.
For each $i=1,2$,
 we may assume that $d_G (r_3 (n), r_i (m_n ))>t_0$ for sufficiently large $n$
 since $r_3$ and $r_i$ correspond to distinct ends of $G$ (cf. \cite[Lemma 8.28]{BH}).
Then we have
\[
\rho_{-} (d_G (r_3 (n), r_i (m_n ))\geq \rho_{-} (t_0)
>\varepsilon
>d(f(r_3 (n)), f(r_i (m_n ))),
\]
whenever $ d(f(r_3 (n)), f(r_i (m_n )))<\varepsilon$,
a contradiction.
\end{proof}

\begin{theorem}\label{thm:4.5}
Let $G$ be a finitely generated group with the word length metric.
Then $G$ has exactly two ends if and only if its Higson corona $\nu G$ is a topological sum of two non-metrizable indecomposable continua,
 each of which is homeomorphic to $\nu\N$.
\end{theorem}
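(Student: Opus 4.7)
The plan is to obtain Theorem \ref{thm:4.5} as an essentially immediate corollary of Lemma \ref{lem:4.4} combined with Theorem \ref{thm:3.12}. The key observation is that the structural hypothesis (i) of Theorem \ref{thm:3.12}, that $X$ be coarsely geodesic and have coarse bounded geometry, is automatically satisfied by any finitely generated group $G$ equipped with the word length metric. Indeed, fixing a finite generating set $S$, the Cayley graph $\mathrm{Cay}(G,S)$ is a geodesic metric space in which $G$ embeds isometrically as the vertex set and which $G$ is coarsely equivalent to, so $G$ is coarsely geodesic; and finiteness of $S$ forces every vertex of $\mathrm{Cay}(G,S)$ to have degree at most $|S\cup S^{-1}|$, whence $\mathrm{Cay}(G,S)$ is of finite type and $G$ has coarse bounded geometry.

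For the forward direction, I would argue as follows. Suppose $G$ has exactly two ends. By Lemma \ref{lem:4.4}, $G$ is coarsely equivalent to $\Z$. Combined with the observation above, Theorem \ref{thm:3.12} applies and yields that $\nu G$ is a topological sum of two indecomposable continua, each homeomorphic to $\nu\N$. The non-metrizability asserted in the statement is then automatic, since $\nu\N$ is non-metrizable by Corollary \ref{cor:3.2}.

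For the reverse direction, suppose $\nu G$ is a topological sum of two indecomposable continua, each homeomorphic to $\nu\N$. Since $G$ already satisfies condition (i) of Theorem \ref{thm:3.12} by the opening paragraph, and condition (ii) is exactly the hypothesis, Theorem \ref{thm:3.12} gives a coarse equivalence between $G$ and $\Z$. Then Lemma \ref{lem:4.4} immediately yields that $G$ has exactly two ends.

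There is no serious obstacle to this proof: the substantive content has been absorbed into Lemma \ref{lem:4.4} (where the two-ended-implies-virtually-$\Z$ theorem from \cite{BH} is invoked, together with a net-and-ray argument to rule out three or more ends from a coarse equivalence with $\Z$) and Theorem \ref{thm:3.12} (where the decomposition via property $(\mathfrak{C})$ and the characterization from Theorem \ref{thm:3.10} are combined). The only minor bookkeeping is the automatic transfer of non-metrizability from $\nu\N$ to each homeomorphic summand.
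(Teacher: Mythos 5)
Your proposal is correct and matches the paper's proof exactly: the paper derives Theorem \ref{thm:4.5} as an immediate consequence of Theorem \ref{thm:3.12} and Lemma \ref{lem:4.4}, just as you do. Your additional remarks --- that a finitely generated group with the word metric automatically satisfies condition (i) of Theorem \ref{thm:3.12}, and that non-metrizability transfers from $\nu\N$ via Corollary \ref{cor:3.2} --- are exactly the bookkeeping the paper leaves implicit.
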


\begin{proof}
This is a consequence of Theorem \ref{thm:3.12} and Lemma \ref{lem:4.4}.
\end{proof}

\medskip
\begin{center}
\textsc{Acknowledgement}
\end{center}
\par
I wish to express my gratitude to Professor Katsuro Sakai for valuable conversations
 and warm encouragement. 
I am also grateful to the referees for their helpful comments.

\bibliographystyle{amsplain}

\end{document}